\calligra \addtolength{\hoffset}{-0.55cm}\setlength
\newcommand{\z}{\mathbb{Z}_{2^m}}
\newcommand{\co}{\mathcal{C}}
\newcommand{\f}{\frac{m}{2}}
\newcommand{\fo}{\frac{m-1}{2}}
\newcommand{\fe}{\frac{m+1}{2}}
\newtheorem{theo}{Theorem}[section]
\newtheorem{lem}{Lemma}[section]
\theoremstyle{remark}
\newtheorem{rem}{Remark}[section]
\theoremstyle{definition}
\newtheorem{defin}{Definition}[section]
\newtheorem{prop}{Proposition}[section]
\begin{document}
\title{Construction of self-dual codes over $\mathbb{Z}_{2^m}$}
\author{Anuradha Sharma{\footnote{Corresponding Author, Email address: anuradha@maths.iitd.ac.in}, Amit K. Sharma}\\
{\it Department of Mathematics}\\{\it Indian Institute of
Technology Delhi}\\{\it New Delhi 110016, India}}
\date{}
\maketitle
\begin{abstract}
Self-dual codes (Type I and Type II codes) play an important role in the construction of even unimodular lattices, and hence in the determination of Jacobi forms. In this paper, we construct both Type I and Type II codes (of higher lengths) over the ring $\z$ of integers modulo $2^m$ from shadows of Type I codes of length $n$ over $\z$ for each positive integer $n,$ and obtain their complete weight enumerators. Using these results, we also determine some Jacobi forms on the modular group $\Gamma(1)=SL(2,\mathbb{Z}).$ Besides this, for each positive integer $n,$ we also construct self-dual codes (of higher lengths)  over $\z$ from the generalized shadow of a self-dual code $\co$ of length $n$ over $\z$ with respect to a vector $s \in \z^n \setminus \co $ satisfying either $s\cdot s \equiv 0~(\text{mod }2^m)$ or $s \cdot s \equiv 2^{m-1}~(\text{mod }2^m).$\\
{\bf Keyword}: Singly-even codes, Doubly-even codes, Modular forms.\\
{\bf 2000 Mathematics Subject Classification}: 94B15.
\end{abstract}
\section{Introduction}
Self-dual codes over finite rings and their shadows have been an interesting object of study for a long time due to their connection with the theory of unimodular lattices and modular forms such as Jacobi forms, elliptic modular forms, Siegel modular forms. Moreover, shadows and generalized shadows of self-dual codes are useful in the construction of self-dual codes of higher lengths.

Conway and Sloane \cite{conslon} first defined the shadow of a binary self-dual code and used it to obtain an upper bound on the minimal distance of binary self-dual codes. From the shadow of a binary Type I code of length $n,$ Brualdi and Pless \cite{bru} constructed a binary self-dual code $\co_1$ of length $n+2$ when $n\equiv 2\text{ or }6~(\text{mod }8)$ and  a binary self-dual code $\co_2$ of length $n+4$ when $n\equiv 0\text{ or }4~(\text{mod }8),$ and obtained weight enumerators of the codes $\co_1$ and $\co_2.$  They also observed that the code $\co_1$ is Type I when $n \equiv 2~(\text{mod }8)$ and is Type II when $n \equiv 6~(\text{mod }8),$ whereas the code $\co_2$ is Type I when $n \equiv 0~(\text{mod }8)$ and is Type II when $n \equiv 4~(\text{mod }8).$ In the same work, they also studied the shadow of a binary Type II code $\co$ of length $n$ by considering a subcode $\co_0$ of $\co$ with codimension 1. From this, they obtained a binary self-dual code of length $n+4$ provided $\textbf{1} \in \co_0$ and a binary self-dual code of length $n+2$ provided $\textbf{1} \not\in \co_0,$ and computed weight enumerators of both the codes. Later, Tsai \cite{han} defined (generalized) shadow of a binary self-dual code $\co$ of length $n$ with respect to a vector $s \in \mathbb{Z}_2^n \setminus \co.$ Using this, he constructed a binary self-dual code of length $n+2$ when $s \cdot s \equiv 1~(\text{mod }2)$ and a binary self-dual code of length $n+4$ when $s \cdot s \equiv 0~(\text{mod }2),$ and computed their weight enumerators. In an attempt to generalize the construction method proposed by Brualdi and Pless \cite{bru} and Tsai \cite{han}, Dougherty et al.  \cite{shadowz4} studied the shadow of a Type I code of length $n$ over the ring $\mathbb{Z}_4$ of integers modulo $4.$ From this, they constructed Type I codes (of higher lengths) over $\mathbb{Z}_4$ when $n\equiv 0,1,2,3~(\text{mod }8)$ and Type II codes (of higher lengths) over $\mathbb{Z}_4$ when $n\equiv 4,5,6,7~(\text{mod }8).$ In the same work, they introduced the notion of generalized shadow of a self-dual code $\co$ of length $n$ over $\mathbb{Z}_4$ with respect to a vector $s \in \mathbb{Z}_4^n \setminus \co$ whose components are either 0 or 2, and used it to construct a self-dual code of length $n+4$ over $\mathbb{Z}_4.$ While doing so, they constructed a code $\co^*$ over $\mathbb{Z}_4$ and claimed that $\co^*$ is a self-orthogonal code. However, the authors observed that the code $\co^*$ is not a linear code over $\mathbb{Z}_4,$ which led to errors in Theorems 3.14-3.16. The aim of this paper is to rectify these errors and to generalize the construction method for codes over the ring $\z$ ($m \geq 1$ is an integer) of integers modulo $2^m.$

On the other hand, Bannai et al. \cite{bannai} studied self-dual codes over the ring $\mathbb{Z}_{2m}$ ($m \geq 1$ is an integer) of integers modulo $2m$ and extended the notion of shadow of a Type I code over $\mathbb{Z}_4$ to Type I codes over  $\mathbb{Z}_{2m}.$ In the same work, they also constructed Siegel modular forms from complete and symmetrized weight enumerators in genus $g$ of Type II codes over $\mathbb{Z}_{2m}.$ Choie and Kim \cite{choie} constructed Jacobi forms from  complete weight enumerators of Type II codes over $\mathbb{Z}_{2m}.$

In this paper, we generalize the construction method employed by Brualdi and Pless \cite{bru} and Tsai \cite{han}, and rectify an error in the construction method proposed by Dougherty et al. \cite{shadowz4}. Here we  obtain both Type I and Type II codes (of higher lengths) over $\z$ from shadows of Type I codes of length $n$ over $\z$ for all $n.$ Also for each positive integer $n,$ we construct  self-dual codes (of higher lengths) over $\z$ from the generalized shadow of a self-dual code $\co$ of length $n$  over $\z$ with respect to a vector $s \in \z^n \setminus \co$ satisfying either $s\cdot s\equiv 0 ~(\text{mod }2^m)$ or $s\cdot s\equiv 2^{m-1}~(\text{mod }2^m).$ We also determine complete weight enumerators of the codes constructed above.  As an application of these results, we also determine some Jacobi forms on the modular group $\Gamma(1)=SL(2,\mathbb{Z}).$

This paper is organized as follows: In Section \ref{secprelim}, we recall some basic definitions, discuss the shadow of a Type I code over $\z$ and the generalized shadow of a self-dual code $\co$ over $\z$ with respect to a vector $s\in\z^n\setminus\co$ satisfying either $s\cdot s\equiv 0~(\text{mod }2^m)$ or $s\cdot s\equiv 2^{m-1}~(\text{mod }2^m).$ In Section \ref{secconsshad}, we first construct a self-orthogonal code (of higher length) over $\z$ from a self-dual code of length $n$ over $\z$ (Proposition \ref{p}). Using this, we construct Type I and Type II codes (of higher lengths) over $\z$ from the shadow of a Type I code of length $n$ over $\z$ for each positive integer $n,$ and compute their complete weight enumerators (Theorems \ref{theosneven} and \ref{theosnodd}).  We also determine some Jacobi forms on the modular group $\Gamma(1)$ from complete weight enumerators  of Type II codes constructed in the respective cases (Theorems \ref{theosnej} and \ref{theosnoj}). In Section \ref{secgenshad}, we construct self-dual codes (of higher lengths) over $\z$ using the generalized shadow  of a self-dual code $\co$ of length $n$ over $\z$ with respect to a vector $s\in \z^n\setminus\co $ for each positive integer $n,$ provided $s\cdot s\equiv 0 \text{ or }2^{m-1}~(\text{mod }2^m)$ (Theorems \ref{theogs0} and \ref{theogsn0}). Here also, we compute complete weight enumerators of self-dual codes constructed in the respective cases.
\section{Some preliminaries}\label{secprelim}
For positive integers $m$ and $n,$ let $\mathbb{Z}_{2^m}^n$ denote the $\z$-module consisting of  all $n$-tuples over the ring $\mathbb{Z}_{2^m}=\{0,1,2,\cdots,2^m-1\}$ of integers modulo $2^m.$ Then a linear code $\mathcal{C}$ of length $n$ over $\z$ is defined as an additive subgroup of $\z^n$ and its elements are called codewords. The size of $\co$ is defined as the total number of codewords in $\co$ and is denoted by $|\co|.$
Furthermore, the dual code of $\co$ is defined as the set $\co^{\perp}=\{v\in\z^n:u\cdot v=0\text{ for all }u\in \mathcal{C}\},$ where $u\cdot v$ denotes the standard bilinear form in $\z^n.$ Observe that the dual code $\co^{\perp}$ is also a linear code of length $n$ over $\z.$ A linear code $\co$ is said to be self-orthogonal if $\co\subseteq\co^{\perp},$ whereas the code $\co$ is said to be self-dual if $\co=\co^{\perp}.$ Then the following result is well-known.
\begin{lem}\label{a} If $\co$ is a self-dual code of length $n$ over $\z,$ then $mn$ is an even integer. \end{lem}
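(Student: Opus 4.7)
The plan is to deduce the claim from the standard cardinality identity
\[
|\co|\cdot|\co^{\perp}| \;=\; |\z|^{\,n} \;=\; 2^{mn}
\]
valid for every linear code $\co$ of length $n$ over $\z$. First I would recall (or briefly justify) this identity: every linear code over $\z$ is permutation equivalent to one generated by a matrix in standard form whose pivot entries are the powers $1,2,4,\ldots,2^{m-1}$ arranged in block-upper-triangular shape, with block sizes $k_0,k_1,\ldots,k_{m-1}$ summing to at most $n$. Reading off $|\co|$ and $|\co^{\perp}|$ from this standard form shows that the two exponents (base $2$) add up to $mn$, giving the identity above.

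Once the identity is in hand, the lemma is immediate. The self-duality hypothesis $\co=\co^{\perp}$ implies $|\co|=|\co^{\perp}|$, so
\[
|\co|^{2} \;=\; 2^{mn}.
\]
Since $|\co|$ is a positive integer (in fact, the cardinality of an additive subgroup of $\z^{n}$), the right-hand side must be a perfect square in $\mathbb{Z}$, which forces the exponent $mn$ to be an even integer.

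The only potentially delicate step is the cardinality identity $|\co|\cdot|\co^{\perp}|=2^{mn}$; after that the argument is a one-line parity observation. Because the lemma is labelled as well-known, I would simply cite this duality-cardinality fact from the standard theory of codes over $\z$ and record the two-line deduction above, keeping the proof as short as its status warrants.
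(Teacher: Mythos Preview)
Your argument is correct: the identity $|\co|\cdot|\co^{\perp}|=2^{mn}$ holds for any linear code $\co$ of length $n$ over $\z$ (via the standard form for generator matrices over $\z$, exactly as you outline), and self-duality then forces $|\co|^2=2^{mn}$, so $mn$ is even.

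As for comparison with the paper: the paper does not actually prove this lemma at all but simply refers the reader to Theorem~2.3 of Dougherty, Gulliver and Harada, \emph{Type II self-dual codes over finite rings and even unimodular lattices}. Your proposal therefore supplies a genuine self-contained argument where the paper gives only a citation. The underlying mathematics is presumably the same (the cited result also rests on the standard-form/cardinality identity), so your write-up is entirely in the spirit of the reference and has the advantage of making the paper more self-contained. Given that the lemma is flagged as well-known, either a one-line citation or your short direct argument is appropriate; if you include the proof, you can safely compress the justification of $|\co|\cdot|\co^{\perp}|=2^{mn}$ to a single sentence invoking the standard form.
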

\begin{proof} For proof, see Theorem 2.3 of Dougherty et al. \cite{dough2pm}. \end{proof}
Next to define Type I and Type II codes over $\z,$ we need to define the Euclidean weight in $\z.$ The Euclidean weight of an element $a\in\z,$ denoted by $wt_E(a),$ is defined as $wt_E(a)=\min\{a^2,(2^m-a)^2\}.$ Furthermore, the Euclidean weight of a vector $v=(v_1,v_2,\cdots,v_n)\in\z^n$ is defined as $wt_E(v)=\sum\limits_{j=1}^{n}wt_E(v_j).$
Then a self-dual code $\co$ is said to be a Type II code if the Euclidean weight of each of its codewords is divisible by $2^{m+1},$ otherwise $\co$ is said to be a Type I code.

To describe various properties (e.g. error-detection capability, error-correction capability, etc.) of a linear code, there are associated some specific polynomials with the code, which are called its weight enumerators. Below we define the complete weight enumerator of a linear code over $\z.$

The complete weight enumerator of a linear code $\co$ of length $n$ over $\z$ is defined as
\vspace{-2mm}$$cwe_{\co}(X_{\mu}:\mu\in\z)=\sum\limits_{c\in\co}X_0^{N_0(c)}X_1^{N_1(c)}\cdots X_{2^m-1}^{N_{2^m-1}(c)},\vspace{-2mm}$$ where for each $c \in \co,$ the number $N_{\mu}(c)$ $(0 \leq \mu \leq 2^m-1)$ equals the total number of components of $c$ that are equal to $\mu.$
Choie and Kim \cite{choie}  related complete weight enumerators of Type II codes over $\z$ with Jacobi forms, which are as defined below:
\begin{defin}\cite{eichler}
Let $\mathfrak{H}$ be the complex upper-half plane. A Jacobi form of weight $k$ and index $u$ ($k, u\in \mathbb{N}$) on the modular group $\Gamma(1)=SL(2,\mathbb{Z})$ is a holomorphic function $\phi:\mathfrak{H}\times \mathbb{C}\rightarrow \mathbb{C}$ satisfying the following:
\begin{enumerate}
\vspace{-2mm}\item[(1)] $(c\tau+d)^{-k}e^{-2\pi i u(\frac{cz^2}{c\tau+d})}\phi\Big(\frac{a\tau+b}{c\tau+d},\frac{z}{c\tau+d}\Big)=\phi(\tau,z)\text{ for all }\left(
           \begin{array}{cc}
           a & b \\
           c & d \\
            \end{array}
            \right)\in \Gamma(1),$
\vspace{-2mm}\item[(2)] $e^{2\pi i u(\lambda^2\tau+2\lambda z)}\phi(\tau,z+\lambda\tau+\epsilon)=\phi(\tau,z)\text{ for all }(\lambda,\epsilon)\in \mathbb{Z}^2,$ and \vspace{-2mm}\item[(3)] $\phi(\tau,z)$ has a Fourier series expansion of the form \vspace{-2mm}$$\sum\limits_{v=0}^{\infty}
\sum\limits_{\substack{r\in\mathbb{Z}\\ r^2\leq 4uv}}c(v,r)q^v\xi^r,\text{ where }q=e^{2\pi i\tau}\text{ and }\xi=e^{2\pi i z}.\vspace{-2mm}$$
\end{enumerate}
\end{defin}
One can also determine Jacobi forms from the shadow of a Type I code, which is as discussed below.
\subsection{Shadow of a Type I code}\label{secshadow}
Let $\co$ be a Type I code of length $n$ over $\z.$ Let us define  $\co_0=\{c \in \co: wt_E(c) \equiv 0~(\text{mod }2^{m+1})\}.$ It is easy to observe that $\co_0$ is a subcode of index 2 in $\co$ and  $\co_0$ is a subcode of index 4 in $\co_0^{\perp}.$ From this, it follows that $\co=\co_0\cup \co_2$ and $\co_0^{\perp}=\co_0\cup \co_2\cup \co_1\cup\co_3,$ where $\co_2=t+\co_0,$ $\co_1=s+\co_0$ and $\co_3=s+t+\co_0$ with $t\in\co\setminus\co_0$  and $s\in\co_0^{\perp}\setminus\co.$  Then the shadow of $\co$ is defined as the set $\mathcal{S}=\co_0^{\perp}\setminus\co=\co_1\cup\co_3.$

Now we have the following well-known result.
\begin{lem}\label{lemsglue}Let $\co$ be a Type I code of length $n$ over $\z.$ Then the integer \begin{itemize}\vspace{-2mm}\item[(a)] $n$ is even if and only if the glue group $\co_0^{\perp} /\co_0$ is the Klein 4-group.
\vspace{-2mm}\item[(b)] $n$ is odd if and only if the glue group $\co_0^{\perp}/\co_0$ is a cyclic group of order $4.$\end{itemize}\end{lem}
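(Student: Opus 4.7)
The plan is to make the structure of $\co_0^\perp/\co_0$ explicit in terms of the coset representatives $t\in \co\setminus\co_0$ and $s\in\co_0^\perp\setminus\co$ introduced in the preamble, and then decide whether the coset of $s$ has order $2$ or $4$.

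I would first verify the coset decomposition. The identity $(2^m-a)^2\equiv a^2\pmod{2^{m+1}}$ gives $wt_E(c)\equiv c\cdot c\pmod{2^{m+1}}$, and combined with $c_1\cdot c_2\equiv 0\pmod{2^m}$ for $c_1,c_2\in\co$ this shows that $wt_E\colon \co\to \mathbb{Z}/2^{m+1}\mathbb{Z}$ is a group homomorphism landing in $\{0,2^m\}$; since $\co$ is Type I the image is nontrivial, so $[\co:\co_0]=2$. Using $|\co|=2^{mn/2}$ (Lemma \ref{a}) this forces $[\co_0^\perp:\co_0]=4$, and the decomposition $\co_0^\perp = \co_0\sqcup(t+\co_0)\sqcup(s+\co_0)\sqcup(s+t+\co_0)$ is then automatic.

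Next I would pin down the two pairings that control the group structure. Since $2t\in \co_0$ and $s\in\co_0^\perp$, I get $s\cdot(2t)\equiv 0\pmod{2^m}$ and hence $s\cdot t\equiv 0\pmod{2^{m-1}}$; on the other hand $s\notin \co^\perp=\co$ forces $s\cdot c\not\equiv 0\pmod{2^m}$ for some $c\in\co\setminus\co_0$, which I take as $t$, giving $s\cdot t\equiv 2^{m-1}\pmod{2^m}$. Combining $(2s)\cdot t\equiv 0\pmod{2^m}$ with $(2s)\cdot c_0\equiv 0\pmod{2^m}$ for $c_0\in\co_0$ yields $2s\in \co^\perp=\co$. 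Thus the glue group is Klein iff $2s\in\co_0$ and cyclic iff $2s\in t+\co_0$; using $wt_E(2s)\equiv 4(s\cdot s)\pmod{2^{m+1}}$ this becomes the clean numerical criterion
\begin{equation*}
\text{Klein} \iff s\cdot s\equiv 0\pmod{2^{m-1}},\qquad \text{cyclic}\iff s\cdot s\equiv 2^{m-2}\pmod{2^{m-1}}.
\end{equation*}

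The main obstacle is then to tie $s\cdot s\pmod{2^{m-1}}$ to the parity of $n$. My plan is to equip $G=\co_0^\perp/\co_0$ with the quadratic form $q(v+\co_0)=v\cdot v\bmod 2^{m+1}$ (well-defined because $2\,v\cdot c_0\equiv 0\pmod{2^{m+1}}$ and $c_0\cdot c_0\equiv 0\pmod{2^{m+1}}$ for $c_0\in\co_0$) and to evaluate the associated Gauss sum $\Theta(G,q)=\sum_{g\in G}\exp\bigl(\tfrac{2\pi i\,q(g)}{2^{m+1}}\bigr)$ in two different ways. A direct evaluation with $q(0)=0$, $q(t)=2^m$, and $q(s)=q(s+t)=s\cdot s$ collapses the sum to $2\exp\bigl(\tfrac{2\pi i\,s\cdot s}{2^{m+1}}\bigr)$, whereas Milgram's formula applied to the rank-$n$ integral lattice $\Lambda(\co_0)=\{x\in\mathbb{Z}^n:x\bmod 2^m\in\co_0\}$, whose discriminant form is isomorphic to $(G,q)$ after rescaling, gives $\Theta(G,q)=\sqrt{|G|}\,e^{i\pi n/4}=2\,e^{i\pi n/4}$. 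Comparing phases yields $s\cdot s\equiv n\cdot 2^{m-2}\pmod{2^{m+1}}$, and reducing mod $2^{m-1}$ produces exactly the dichotomy in (a) and (b).
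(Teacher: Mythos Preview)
Your argument is correct, and it takes a genuinely different route from the paper: the paper does not prove this lemma at all but simply cites Proposition~3 of \cite{shadowz2k}. You instead give a self-contained lattice-theoretic argument. After reducing the Klein/cyclic dichotomy to the residue of $s\cdot s$ modulo $2^{m-1}$ (the observation $2s\in\co$ is the key step here, and your derivation of it is clean), you identify $(\co_0^\perp/\co_0,\,q)$ with the discriminant form of the even positive-definite lattice $\tfrac{1}{\sqrt{2^m}}\{x\in\mathbb{Z}^n:x\bmod 2^m\in\co_0\}$ and read off the Gauss sum from Milgram's signature formula. The resulting identity $s\cdot s\equiv n\cdot 2^{m-2}\pmod{2^{m+1}}$ is in fact stronger than needed: it simultaneously yields the present lemma (upon reduction modulo $2^{m-1}$) and Lemma~\ref{2.2}, which the paper likewise defers to \cite{shadowz2k}. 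The trade-off is that you import Milgram's formula as a black box, whereas the cited reference presumably argues directly with the code and its shadow; but for a reader comfortable with discriminant forms your approach is shorter, more conceptual, and makes transparent \emph{why} the parity of $n$ governs the glue group---it is the signature of the associated lattice modulo $8$.
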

\begin{proof} For proof, see Proposition 3 of Dougherty et al. \cite{shadowz2k}.\end{proof}

Next we state orthogonality relations between the cosets of $\co_0$ in $\co_0^{\perp}.$
\begin{theo}\textup{\cite{shadowz2k}}\label{theosorel}
Let $\co$ be a Type I code of length $n$ over $\z$ with $\co_i$ $(0 \leq i \leq 3)$ as defined above.
\begin{itemize}
\vspace{-2mm}\item[(a)] If $n\equiv 2~(\text{mod }4),$ then we have the following:
\vspace{-2mm}\begin{center}
\begin{tabular}{|c|c|c|c|c|}
  \hline
   $\cdot$ & $\co_0$ & $\co_1$ & $\co_2$ & $\co_3$ \\
   \hline
  $\co_0$ & $\perp$ & $\perp$ & $\perp$ & $\perp$ \\
  $\co_1$ & $\perp$ & $\not\perp$ & $\not\perp$ & $\perp$ \\
  $\co_2$ & $\perp$ & $\not\perp$ & $\perp$ & $\not\perp$ \\
  $\co_3$ & $\perp$ & $\perp$ & $\not\perp$ & $\not\perp$ \\
  \hline
\end{tabular}\vspace{-2mm}
\end{center}
where for $0 \leq i,j \leq 3,$ the symbol $\perp$ in the $(i,j)$th position means $x\cdot y\equiv 0~(\text{mod }2^m)$ for each $x\in\co_i$ and $y\in\co_j,$ whereas the symbol $\not\perp$ means  $x\cdot y \equiv 2^{m-1}~(\text{mod }2^m)$ for each $x\in\co_i$ and $y\in\co_j.$
\vspace{-2mm}\item[(b)] If $n\equiv 0~(\text{mod }4),$ then we have the following:
\vspace{-2mm}\begin{center}
\begin{tabular}{|c|c|c|c|c|}
  \hline
   $\cdot$ & $\co_0$ & $\co_1$ & $\co_2$ & $\co_3$ \\
   \hline
  $\co_0$ & $\perp$ & $\perp$ & $\perp$ & $\perp$ \\
  $\co_1$ & $\perp$ & $\perp$ & $\not\perp$ & $\not\perp$ \\
  $\co_2$ & $\perp$ & $\not\perp$ & $\perp$ & $\not\perp$ \\
  $\co_3$ & $\perp$ & $\not\perp$ & $\not\perp$ & $\perp$ \\
  \hline
\end{tabular}\vspace{-2mm}
\end{center}
where for $0 \leq i,j \leq 3,$  the symbol $\perp$ in the $(i,j)$th position means $x\cdot y \equiv 0~(\text{mod }2^m)$ for each $x\in\co_i$ and $y\in\co_j,$ whereas the symbol $\not\perp$ means  $x\cdot y\equiv 2^{m-1} ~(\text{mod }2^m)$ for each $x\in\co_i$ and $y\in\co_j.$
\vspace{-2mm}\item[\textup{(c)}] Let $n\equiv a~(\text{mod }4),$ where $a=1$ or $3.$ Here we have the following:
\vspace{-2mm}\begin{center}
\begin{tabular}{|c|c|c|c|c|}
  \hline
   $\cdot$ & $\co_0$ & $\co_1$ & $\co_2$ & $\co_3$ \\
   \hline
  $\co_0$ & $0$ & $0$ & $0$ & $0$ \\
  $\co_1$ & $0$ & $2^{m-1}-2^{m-2}a$ & $2^{m-1}$ & $-2^{m-2}a$ \\
  $\co_2$ & $0$ & $2^{m-1}$ & $0$ & $2^{m-1}$ \\
  $\co_3$ & $0$ & $-2^{m-2}a$ & $2^{m-1}$ & $2^{m-1}-2^{m-2}a$ \\
  \hline
\end{tabular}\vspace{-2mm}
\end{center}
where for $0 \leq i,j \leq 3,$ the $(i,j)$th entry in the table represents the value of $x\cdot y$ $(x\in\co_i$ and $y\in\co_j)$ modulo $2^m.$
\end{itemize}
\end{theo}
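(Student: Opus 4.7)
The plan is to pick coset representatives $0$, $s$, $t$, $s+t$ for $\co_0$, $\co_1$, $\co_2$, $\co_3$ and reduce the whole theorem to computing the three scalars $t\cdot t$, $s\cdot t$, $s\cdot s$ modulo $2^m$; every remaining entry then follows by $\mathbb{Z}$-bilinearity. The first step is to verify that the pairing $(x,y)\mapsto x\cdot y\pmod{2^m}$ descends to the glue group $\co_0^\perp/\co_0$: for $c,d\in\co_0$ and $x,y\in\co_0^\perp$ one has $c\cdot y\equiv x\cdot d\equiv 0\pmod{2^m}$ from the definition of $\co_0^\perp$, and $c\cdot d\equiv 0\pmod{2^m}$ since $\co_0\subseteq\co=\co^\perp$, so replacing $x,y$ by coset representatives does not change $x\cdot y\pmod{2^m}$.

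Next I would pin down the two easy scalars. Because $t\in\co\subseteq\co^\perp$, we immediately get $t\cdot t\equiv 0\pmod{2^m}$. For $s\cdot t$ the key input is the term-wise identity $wt_E(a)\equiv a^2\pmod{2^{m+1}}$ (which follows from $(2^m-a)^2-a^2=-2^{m+1}a+2^{2m}$), summed to give $wt_E(v)\equiv \sum v_i^2\pmod{2^{m+1}}$ for $v\in\z^n$. Applying this to $v=2t$ and using that the integer $t\cdot t=\sum t_i^2$ is $\equiv 2^m\pmod{2^{m+1}}$ (equivalent to $wt_E(t)\equiv 2^m\pmod{2^{m+1}}$, the defining condition for $t\in\co\setminus\co_0$), we obtain $(2t)\cdot(2t)=4(t\cdot t)\equiv 0\pmod{2^{m+2}}$, hence $wt_E(2t)\equiv 0\pmod{2^{m+1}}$ and $2t\in\co_0$. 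Therefore $s\cdot(2t)\equiv 0\pmod{2^m}$, so $s\cdot t\in\{0,\,2^{m-1}\}$; the value $0$ is excluded since, combined with $s\cdot\co_0\equiv 0$, it would force $s\in\co^\perp=\co$. Thus $s\cdot t\equiv 2^{m-1}\pmod{2^m}$ in every case.

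The heart of the theorem is $s\cdot s$. Here I would use Lemma \ref{lemsglue} to split on the parity of $n$. If $n$ is even, the glue group is Klein four, so $2s\in\co_0$ and $2(s\cdot s)=s\cdot(2s)\equiv 0\pmod{2^m}$, confining $s\cdot s$ to $\{0,\,2^{m-1}\}$. If $n$ is odd, the glue group is cyclic of order four; since $\co/\co_0$ is the unique subgroup of order two and contains the class of $t$, the element $s$ must have order four and $2s\equiv t\pmod{\co_0}$, yielding $2(s\cdot s)\equiv s\cdot t\equiv 2^{m-1}\pmod{2^m}$ and confining $s\cdot s$ to $\{2^{m-2},\,2^{m-2}+2^{m-1}\}$. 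The main obstacle is the final refinement: pinning down, within each parity class, the correct value of $s\cdot s$ as a function of $n\pmod 4$ (the $2^{m-1}$ versus $0$ split in (a),(b) and the $\pm 2^{m-2}$ split in (c)). For this I would choose a canonical representative of the shadow coset $s+\co_0$ adapted to a standard-form generator matrix of $\co$ and evaluate $wt_E(s)\pmod{2^{m+2}}$ by exploiting the congruence $s\cdot s\equiv wt_E(s)\pmod{2^{m+1}}$ together with the weight-spectrum constraints imposed by the self-duality of $\co$ and by the value of $n$ modulo $4$. Once $s\cdot s$ is determined, bilinearity completes the tables via $(s+t)\cdot(s+t)\equiv s\cdot s\pmod{2^m}$, $s\cdot(s+t)\equiv s\cdot s+2^{m-1}\pmod{2^m}$, and $t\cdot(s+t)\equiv 2^{m-1}\pmod{2^m}$.
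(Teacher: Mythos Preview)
The paper does not supply its own argument here: the ``proof'' is simply a citation to Theorem~6 of Dougherty, Harada and Sol\'e \cite{shadowz2k}. So there is no in-paper proof to compare against; I will assess your proposal on its own terms.

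Your reduction is sound and well organized. The observation that the pairing descends to $\co_0^\perp/\co_0$, the computation $t\cdot t\equiv 0$, and the argument that $s\cdot t\equiv 2^{m-1}$ (via $2t\in\co_0$, which you can get even more cheaply from $[\co:\co_0]=2$) are all correct, as is the parity split coming from Lemma~\ref{lemsglue}. Where your write-up becomes a plan rather than a proof is the ``final refinement'' for $s\cdot s$: choosing a standard-form representative and reading off $wt_E(s)\pmod{2^{m+2}}$ is more work than needed and you do not actually carry it out.

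The clean way to close that gap---and presumably the one intended in \cite{shadowz2k}, since it is their Theorem~5, stated here as Lemma~\ref{2.2}---is to invoke $wt_E(s)\equiv 2^{m-2}n\pmod{2^{m+1}}$ directly. You have already established the termwise identity $wt_E(v)\equiv \sum v_i^2\pmod{2^{m+1}}$, so $s\cdot s\equiv wt_E(s)\equiv 2^{m-2}n\pmod{2^{m}}$. Reducing $n$ modulo $4$ gives exactly the four values $0$, $2^{m-1}$, $2^{m-2}$, $3\cdot 2^{m-2}$ in cases $n\equiv 0,2,1,3\pmod 4$, matching the $(\co_1,\co_1)$ entries of all three tables; your bilinearity identities then fill in the rest. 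With this substitution your argument is complete and matches what the cited reference does.
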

\begin{proof}
For proof, see Dougherty et al. \cite[Theorem 6]{shadowz2k}.
\end{proof}
The following lemma provides the Euclidean weight (modulo $2^{m+1}$) of any vector in the shadow.
\begin{lem}\textup{\cite{shadowz2k}}\label{2.2}
Let $\co$ be a Type I code of length $n$ over $\z$ and $S=\co_1 \cup \co_3$ be its shadow code. Then for every $s \in S,$ we have $wt_E(s) \equiv 2^{m-2}n~(\text{mod }2^{m+1}).$
\end{lem}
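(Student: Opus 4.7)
The plan is to first show that $wt_E(s)\pmod{2^{m+1}}$ is constant as $s$ ranges over $\mathcal{S}=\co_1\cup\co_3$, and then to identify the common value by a Gauss-sum / Poisson-summation argument. The preliminary observation is that for any integer lift of $a\in\z$ to $\{0,\ldots,2^m-1\}$ one has $(a+2^mk)^2\equiv a^2\pmod{2^{m+1}}$ and $(2^m-a)^2\equiv a^2\pmod{2^{m+1}}$, so the quadratic form $v\mapsto v\cdot v$ descends to a well-defined map $\z^n\to\mathbb{Z}_{2^{m+1}}$ that coincides with $wt_E\pmod{2^{m+1}}$. In particular, the identity $(s+u)\cdot(s+u)=s\cdot s+2\,s\cdot u+u\cdot u$ is valid modulo $2^{m+1}$.

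For the constancy, fix $s\in\co_1$ and $u\in\co_0$. Then $wt_E(u)\equiv 0\pmod{2^{m+1}}$ by definition of $\co_0$, while $s\in\co_0^\perp$ forces $s\cdot u\equiv 0\pmod{2^m}$, hence $2\,s\cdot u\equiv 0\pmod{2^{m+1}}$. Thus $wt_E(s+u)\equiv wt_E(s)\pmod{2^{m+1}}$, and similarly on $\co_3$. To bridge the two cosets, pick $t\in\co_2$: since $t\in\co\setminus\co_0$ one has $wt_E(t)\equiv 2^m\pmod{2^{m+1}}$ (because $t\cdot t\equiv 0\pmod{2^m}$ but $t\notin\co_0$), and by Theorem \ref{theosorel} (all four residues of $n$ modulo $4$) one reads off $s\cdot t\equiv 2^{m-1}\pmod{2^m}$, giving $2\,s\cdot t\equiv 2^m\pmod{2^{m+1}}$. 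The two $2^m$'s cancel, so $wt_E(s+t)\equiv wt_E(s)\pmod{2^{m+1}}$ and there is a single constant $W$ with $wt_E(s)\equiv W\pmod{2^{m+1}}$ for every $s\in\mathcal{S}$.

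To evaluate $W$, set $q=e^{2\pi i/2^{m+1}}$ and $f(v)=q^{v\cdot v}$, well defined on $\z^n$ by the first paragraph. Poisson summation applied to $\co_0\subseteq\z^n$, using the characters $\chi_w(v)=e^{2\pi i v\cdot w/2^m}$, gives $\sum_{v\in\co_0}f(v)=\frac{|\co_0|}{2^{mn}}\sum_{w\in\co_0^\perp}\hat f(w)$; the left-hand side is $|\co_0|$ since $f\equiv 1$ on $\co_0$. Because $f$ factorises coordinate-wise, completing the square yields $\hat f(w)=G^n q^{-w\cdot w}$, where $G=\sum_{a\in\z}q^{a^2}$ is the quadratic Gauss sum. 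Using the periodicity $(a+2^m)^2\equiv a^2\pmod{2^{m+1}}$ and the classical formula $\sum_{a=0}^{N-1}e^{2\pi i a^2/N}=(1+i)\sqrt N$ for $N=2^{m+1}$ gives $G=2^{m/2}e^{i\pi/4}$. Splitting $\co_0^\perp=\co_0\sqcup\co_2\sqcup\co_1\sqcup\co_3$ and using $q^{-2^m}=-1$ together with the constancy just proved, the right-hand side becomes $\tfrac{|\co_0|}{2^{mn}}\,G^n\cdot 2|\co_0|q^{-W}$. Substituting $|\co_0|=2^{mn/2-1}$ and $G^n=2^{mn/2}e^{i\pi n/4}$, every power of $2$ cancels and one is left with $q^{-W}=e^{-i\pi n/4}$, i.e.\ $W\equiv 2^{m-2}n\pmod{2^{m+1}}$.

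The main obstacle is this last step: the constancy of $wt_E\pmod{2^{m+1}}$ on $\mathcal{S}$ follows formally from the definitions and the orthogonality table, but isolating the specific residue $2^{m-2}n$ requires genuine input beyond Theorem \ref{theosorel}, namely the Gauss-sum evaluation $G=2^{m/2}e^{i\pi/4}$. An equivalent route is to pass to the Construction-$A'$ lattice $L=\tfrac{1}{\sqrt{2^m}}\{x\in\mathbb{Z}^n:x\bmod 2^m\in\co\}$, which is odd unimodular of rank $n$, and invoke the Conway-Sloane shadow-norm identity asserting that shadow vectors of such a lattice have squared length congruent to $n/4$ modulo $2$; rescaling by $2^m$ then recovers the claim.
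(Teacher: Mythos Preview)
Your proof is correct. The paper itself gives no independent argument here, simply citing Theorem~5 of Dougherty, Harada and Sol\'{e} (2000). In that reference the argument is precisely the lattice-theoretic one you sketch at the end: one shows that if $\co$ is Type~I then the Construction~$A$ lattice $\Lambda(\co)=\tfrac{1}{\sqrt{2^m}}\{x\in\mathbb{Z}^n: x\bmod 2^m\in\co\}$ is odd unimodular of rank $n$, identifies the shadow of $\co$ with the shadow of $\Lambda(\co)$, and then invokes the standard fact that shadow vectors of an odd unimodular lattice of rank $n$ have norm congruent to $n/4$ modulo $2$.

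Your primary Gauss-sum/Poisson-summation route is a genuine alternative: it stays entirely on the code side and avoids importing lattice theory, extracting the residue $2^{m-2}n$ directly from the evaluation $\sum_{a\in\z}e^{2\pi i a^2/2^{m+1}}=2^{m/2}e^{i\pi/4}$. The constancy step (that $wt_E$ modulo $2^{m+1}$ is constant on $\mathcal{S}$) is common to both approaches, since the lattice norm is just a rescaling of the Euclidean weight; what differs is the identification of $W$. Your Fourier-analytic computation is more self-contained and explicit, while the lattice version situates the result within the established shadow theory of unimodular lattices and makes the link to theta series immediate.
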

\begin{proof}
For proof, see Dougherty et al. \cite[Theorem 5]{shadowz2k}.
\end{proof}
Note that the shadow is defined only for Type I codes, as the shadow of a Type II code is an empty set. This motivates Dougherty et al. \cite{shadowz4} to further generalize the notion of shadow for any self-dual linear code over $\mathbb{Z}_{4},$ which is as discussed below.
\subsection{Generalized shadow of a self-dual code}
Here we discuss the generalized shadow of a self-dual code of length $n$ over $\z.$

Let $\co$ be a self-dual code of length $n$ over $\z.$ For a vector $s\in\z^n \setminus\co,$ let us define a map $\psi_s:\co\rightarrow\z$ as $\psi_s(u)=u\cdot s$ for each $u\in\co.$ Note that $\psi_s$ is a group homomorphism from the additive group $\co$ to the additive group $\z$ and the set $\co_0=ker(\psi_s)$ is a proper subcode of $\co.$  Hence by the first isomorphism theorem, we have $\co/\co_0\simeq Im (\psi_s).$ This gives $[\co:\co_0]=|Im (\psi_s)|=r,$ where $r>1$ is a divisor of $2^m.$ Further choose a vector $s$ such that $Im (\psi_s)=\{0,2^{m-1}\}.$ With this choice of $s,$ we have $[\co:\co_0]=2$ and we write $\co=\co_0\cup\co_2.$ Also note that $[\co_0^{\perp}:\co_0]=4$ and let us write $\co_0^{\perp}=\co_0\cup\co_2\cup\co_1\cup\co_3.$ Then the generalized shadow of $\co$ with respect to the vector $s$ is defined as $\mathcal{S}_g(s)=\co_1\cup\co_3.$

Now we state some important properties of the generalized shadow of $\co$ with respect to the vector $s.$
\begin{lem}
Let $\co$ be a self-dual code of length $n$ over $\z.$  Let  $s \in \z^n\setminus \co$ be such that $Im (\psi_s)=\{0,2^{m-1}\}.$ With respect to the vector $s,$ let $\co=\co_0\cup\co_2$ and  $\co_0^{\perp}=\co_0\cup\co_2\cup\co_1\cup\co_3.$ Then we have $\co_1=s+\co_0,$ $\co_2=t+\co_0,$ $\co_3=s+t+\co_0$ for some $t\in\co\setminus\co_0.$
\end{lem}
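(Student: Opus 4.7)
The plan is to use three basic facts in sequence: (i) the containment $\co \subseteq \co_0^{\perp}$, which follows from self-duality; (ii) the fact that $s \in \co_0^{\perp}$, which follows from the definition of $\co_0 = \ker\psi_s$; and (iii) elementary coset arithmetic in the abelian group $\co_0^{\perp}/\co_0$, which has order $4$ by the given decomposition.

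First I would fix $t \in \co \setminus \co_0$. Since $[\co:\co_0]=2$ and $\co = \co_0 \cup \co_2$, the nontrivial coset of $\co_0$ in $\co$ is $t+\co_0$, so $\co_2 = t+\co_0$. Next I would verify $s \in \co_0^{\perp}$: for any $u \in \co_0$, we have $u \cdot s = \psi_s(u) = 0$, since $u$ lies in the kernel of $\psi_s$. By hypothesis $s \notin \co$, so the coset $s+\co_0$ lies in $\co_0^{\perp}\setminus \co = \co_1 \cup \co_3$. Since $s+\co_0$ is a single coset of $\co_0$, it must coincide with one of $\co_1$ or $\co_3$; after relabeling the two "outer" cosets if necessary (the labels $\co_1$ and $\co_3$ are symmetric here), we may assume $\co_1 = s+\co_0$.

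To identify $\co_3$, I would consider the coset $s+t+\co_0$. Both $s$ and $t$ lie in $\co_0^{\perp}$ (using $\co = \co^{\perp} \subseteq \co_0^{\perp}$ since $\co_0 \subseteq \co$), so $s+t \in \co_0^{\perp}$. Moreover, $s+t \notin \co$, because $s+t \in \co$ together with $t \in \co$ would force $s \in \co$, contrary to hypothesis. Hence $s+t+\co_0 \subseteq \co_0^{\perp}\setminus\co$, and this coset must be $\co_1$ or $\co_3$. If it were $\co_1 = s+\co_0$, then $t \in \co_0$, contradicting the choice of $t$. Therefore $s+t+\co_0 = \co_3$, which completes the identification.

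There is no serious obstacle here; the argument is entirely a bookkeeping exercise about the four cosets of $\co_0$ in $\co_0^{\perp}$. The only spot that deserves a moment's thought is confirming $s+t \notin \co$ and that $s+\co_0 \neq s+t+\co_0$, both of which reduce immediately to the assumption $s \notin \co$ and the choice $t \notin \co_0$.
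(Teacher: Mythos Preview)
Your proposal is correct and is exactly the elementary coset-bookkeeping one would expect; the paper itself gives no details here, simply referring to Lemma~3.12 of Dougherty et al.\ for the analogous argument over $\mathbb{Z}_4$. Your verification that $s\in\co_0^{\perp}\setminus\co$ and $s+t\in\co_0^{\perp}\setminus\co$, together with the relabeling remark, is precisely what that reference amounts to.
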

\begin{proof} Working in a similar way as in Lemma 3.12 of Dougherty et al. \cite{shadowz4}, the result follows.
\end{proof}
\begin{lem}
Let $\co$ be a self-dual code of length $n$ over $\z.$ Let $s \in \z^n \setminus \co$ be such that $Im (\psi_s)=\{0,2^{m-1}\}.$ Let $\co=\co_0\cup\co_2$ and  $\co_0^{\perp}=\co_0\cup\co_2\cup\co_1\cup\co_3$  with respect to the vector $s.$ Then we have the following:
\begin{itemize}
\vspace{-2mm}\item[(a)] If $s$ is a vector in $\z^n \setminus \co$ satisfying $s\cdot s\equiv 0~(\text{mod }2^m),$ then the orthogonality relations between the cosets of $\co_0$ in $\co_0^{\perp}$ are given by
    \vspace{-2mm}\begin{center}
\begin{tabular}{|c|c|c|c|c|}
  \hline
   $\cdot$ & $\co_0$ & $\co_1$ & $\co_2$ & $\co_3$ \\
   \hline
  $\co_0$ & $\perp$ & $\perp$ & $\perp$ & $\perp$ \\
  $\co_1$ & $\perp$ & $\perp$ & $\not\perp$ & $\not\perp$ \\
  $\co_2$ & $\perp$ & $\not\perp$ & $\perp$ & $\not\perp$ \\
  $\co_3$ & $\perp$ & $\not\perp$ & $\not\perp$ & $\perp$ \\
  \hline
\end{tabular}\vspace{-2mm}
\end{center}
where the symbol $\perp$ means $x\cdot y\equiv 0~(\text{mod }2^m)$ and the symbol $\not\perp$ means $x\cdot  y\equiv 2^{m-1}~(\text{mod }2^m).$
\vspace{-2mm}\item[(b)] If $s$ is a vector in $\z^n \setminus \co$ satisfying $s\cdot s\equiv 2^{m-1}~(\text{mod }2^m),$ then the orthogonality relations between the cosets of $\co_0$ in $\co_0^{\perp}$ are given by \vspace{-2mm}\begin{center}
\begin{tabular}{|c|c|c|c|c|}
  \hline
   $\cdot$ & $\co_0$ & $\co_1$ & $\co_2$ & $\co_3$ \\
   \hline
  $\co_0$ & $\perp$ & $\perp$ & $\perp$ & $\perp$ \\
  $\co_1$ & $\perp$ & $\not\perp$ & $\not\perp$ & $\perp$ \\
  $\co_2$ & $\perp$ & $\not\perp$ & $\perp$ & $\not\perp$ \\
  $\co_3$ & $\perp$ & $\perp$ & $\not\perp$ & $\not\perp$ \\
  \hline
\end{tabular}\vspace{-2mm}
\end{center}
where the symbol $\perp$ means $x\cdot y\equiv 0~(\text{mod }2^m)$ and the symbol $\not\perp$ means $x\cdot  y\equiv 2^{m-1}~(\text{mod }2^m).$
\end{itemize}
\end{lem}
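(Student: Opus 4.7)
The strategy is to reduce the entire table to a single bilinear expansion. Using the explicit coset decomposition from the preceding lemma, every $x\in\co_0^{\perp}$ can be written uniquely as $x=\alpha s+\beta t+c$ with $\alpha,\beta\in\{0,1\}$ and $c\in\co_0$, and similarly $y=\alpha' s+\beta' t+c'$. The $\co_0$-row and $\co_0$-column of each table are $\perp$ directly from the definition of $\co_0^{\perp}$, so only the nine entries indexed by $i,j\in\{1,2,3\}$ require work, and by symmetry of the bilinear form only six distinct values need to be checked.

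Expanding $x\cdot y$ bilinearly yields nine terms, most of which collapse modulo $2^m$ by three elementary observations: (i) $\co_0\subseteq\co=\co^{\perp}$, so $c\cdot c'\equiv c\cdot t\equiv t\cdot c'\equiv t\cdot t\equiv 0\pmod{2^m}$ since each of these involves two elements of $\co$; (ii) $\co_0=\ker\psi_s$, whence $c\cdot s\equiv s\cdot c'\equiv 0\pmod{2^m}$; and (iii) $t\in\co\setminus\co_0$ together with $\mathrm{Im}(\psi_s)=\{0,2^{m-1}\}$ forces $s\cdot t=\psi_s(t)\equiv 2^{m-1}\pmod{2^m}$. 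What survives is the closed form
\[
x\cdot y\;\equiv\;\alpha\alpha'\,(s\cdot s)\;+\;(\alpha\beta'+\beta\alpha')\,2^{m-1}\pmod{2^m}.
\]
Note that $c$ and $c'$ have dropped out, confirming a priori that the residue is constant on each coset pair, as the table requires.

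With this identity in hand, both parts become mechanical substitutions. In part (a), $s\cdot s\equiv 0\pmod{2^m}$ gives $x\cdot y\equiv(\alpha\beta'+\beta\alpha')2^{m-1}\pmod{2^m}$; evaluating at the six pairs drawn from $\{(1,0),(0,1),(1,1)\}$ reproduces the first table, the only entry needing a moment's care being $\co_3\cdot\co_3\equiv 2\cdot 2^{m-1}\equiv 0$. In part (b), $s\cdot s\equiv 2^{m-1}\pmod{2^m}$ gives $x\cdot y\equiv(\alpha\alpha'+\alpha\beta'+\beta\alpha')2^{m-1}\pmod{2^m}$; the same six evaluations match the second table, with $\co_3\cdot\co_3\equiv 3\cdot 2^{m-1}\equiv 2^{m-1}$ correctly producing $\not\perp$ rather than $\perp$. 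No step is a real obstacle; the proof is a careful bookkeeping exercise, and isolating the closed form above is what prevents the same simplifications from being repeated six times by hand.
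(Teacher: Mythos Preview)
Your proof is correct and complete: the bilinear expansion of $x\cdot y$ using the coset representatives $\alpha s+\beta t+c$, together with the three observations (self-duality of $\co$, $\co_0=\ker\psi_s$, and $\psi_s(t)=2^{m-1}$), yields exactly the closed form you state, and the six substitutions recover both tables. The paper does not give a self-contained argument here---it simply refers the reader to Lemma~3.13 of Dougherty et al.\ \cite{shadowz4} and says the proof is similar---so your explicit derivation is in fact more informative than what the paper provides, while following the same underlying method of computing inner products on coset representatives.
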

\begin{proof} Working in a similar way as in Lemma 3.13 of Dougherty et al. \cite{shadowz4}, the result follows.
\end{proof}
In the following sections, we will construct self-dual codes (of higher lengths) over $\z$ using shadows of Type I codes over $\z$ and generalized shadows of self-dual codes over $\z$ with respect to a vector $s \in \z^n,$ which is not a codeword and satisfies $s \cdot s \equiv 0 \text{ or }2^{m-1}~(\text{mod }2^m).$ While doing so, we will generalize the construction method employed by Brualdi and Pless \cite{bru} and Tsai \cite{han}, besides rectifying an error in the construction proposed by Dougherty et al. \cite{shadowz4}.
\section{Construction of Type I and Type II codes from shadows of Type I codes}\label{secconsshad}
 In  order to construct Type I and Type II codes  over the ring $\z$ ($m \geq 1$ is an integer) from the shadow of a Type I code over $\z,$ we will first construct a self-orthogonal code over $\z$ from a self-dual code over $\z.$  For this, we need the following notations.

Throughout this paper, let $\left<A\right>$ denote the vector subspace of $\z^n$ generated by a subset $A$ of $\z^n$ and  $o(a)$ denote the additive order of an element $a\in\z^k.$ Let us define a function $\eta : \z \times \z \rightarrow \{0,1,2,3\}$ as
\vspace{-2mm}\begin{equation}\label{n}\eta(i,j)=\left\{
              \begin{array}{ll}
                \left[i\right]_2+2\left[j\right]_2 & \text{if }\co_0^{\perp}/\co_0
                \text{ is the Klein 4-group;}\\
                \left[i+2j\right]_4 & \text{if }\co_0^{\perp}/\co_0 \text{ is a cyclic group of order 4,}
              \end{array}
            \right.\end{equation} where for integers $a$ and $r \geq 1,$ $[a]_r$ denotes the remainder obtained upon dividing $a$ by $r.$ Then we observe the following:
\begin{lem}\label{c} For $1 \leq i,j \leq 2^m-1,$ we have $i\co_1+j\co_2=\co_{\eta(i,j)}.$\end{lem}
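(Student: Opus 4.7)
The plan is to reduce the claim to a short computation in the quotient group $\co_0^\perp/\co_0,$ whose structure is determined by Lemma \ref{lemsglue}.

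First I would observe that every element of $i\co_1+j\co_2$ lies in the single coset $is+jt+\co_0$ of $\co_0.$ Indeed, writing $c_1=s+u$ and $c_2=t+v$ with $u,v\in\co_0,$ we get $ic_1+jc_2=is+jt+iu+jv$ with $iu,jv\in\co_0$ (since $\co_0$ is a linear code over $\z,$ hence closed under $\z$-scalar multiplication). Thus the lemma reduces to identifying the coset $is+jt+\co_0$ as one of $\co_0,\co_1,\co_2,\co_3.$

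I would then split into the two cases supplied by Lemma \ref{lemsglue}. In the Klein 4-group case ($n$ even), the exponent of the quotient is $2,$ so $2s,2t\in\co_0$ and $is+jt+\co_0=[i]_2 s+[j]_2 t+\co_0.$ Checking the four values of $([i]_2,[j]_2)\in\{0,1\}^2,$ combined with $\co_1=s+\co_0,\,\co_2=t+\co_0,\,\co_3=s+t+\co_0,$ immediately identifies this coset as $\co_{[i]_2+2[j]_2}=\co_{\eta(i,j)}.$

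In the cyclic case ($n$ odd, $\co_0^\perp/\co_0\cong\mathbb{Z}_4$), the key step is to show that $[t]=2[s]$ in the quotient. Since $[\co:\co_0]=2$ and $t\in\co\setminus\co_0,$ we have $2t\in\co_0,$ so $[t]$ is a non-identity element of order dividing $2$ in $\mathbb{Z}_4;$ but $\mathbb{Z}_4$ has a unique such element, namely $2\cdot g$ for any generator $g.$ Since $[s]\notin\{[\co_0],[t]\}$ (because $s\in\co_0^\perp\setminus\co$ while $t\in\co$), $[s]$ must itself be a generator, forcing $[t]=2[s],$ i.e.\ $t-2s\in\co_0.$ Consequently $is+jt+\co_0=(i+2j)s+\co_0=[i+2j]_4\,s+\co_0$ (using $4s\in\co_0$), and since $\co_r=rs+\co_0$ for $r=0,1,2,3,$ this coset equals $\co_{[i+2j]_4}=\co_{\eta(i,j)}.$ The only step requiring a moment's thought is the identification $[t]=2[s],$ which rests on the uniqueness of the order-$2$ element in $\mathbb{Z}_4;$ the remainder is bookkeeping in the glue group.
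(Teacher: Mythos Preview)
Your proposal is correct and follows the same approach the paper has in mind: the paper's own proof reads simply ``Proof is trivial,'' and your argument is precisely the expanded version of that triviality, namely a direct computation in the glue group $\co_0^{\perp}/\co_0$ using the structure provided by Lemma~\ref{lemsglue}. In particular, your identification $[t]=2[s]$ in the cyclic case (via uniqueness of the order-$2$ element in $\mathbb{Z}_4$) is exactly the observation that makes the formula $\eta(i,j)=[i+2j]_4$ work.
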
\begin{proof}Proof is trivial.\end{proof}

In the following proposition, we construct a self-orthogonal code (of higher length) over $\z$  from a self-dual code of length $n$ over $\z.$
\begin{prop}\label{p}
Let $\co$ be a self-dual code of length $n$ over $\z$ and $\co_0$ be a proper subcode of index 2 in $\co.$ Let $\co=\co_0\cup\co_2$ and $\co_0^{\perp}=\co_0\cup\co_2\cup\co_1\cup\co_3,$ where $\co_2=t+\co_0,$ $\co_1=s+\co_0$ and $\co_3=s+t+\co_0$ for some $t\in\co\setminus\co_0$ and  $s\in\co_0^{\perp}\setminus\co.$  Suppose that there exist vectors $v_1,v_2 \in \z^{k}$ satisfying the following three properties:
 \vspace{-2mm}\begin{enumerate}\item[$(\mathbf{P_1)}$] If there exist $\alpha,\beta \in \z$ ($0 \leq \alpha < o(v_1)$ and $0 \leq \beta < o(v_2)$) such that $\alpha v_1+\beta v_2=0,$ then $\alpha=\beta=0.$
 \item[$(\mathbf{P_2)}$] $\label{key}
v_{1}\cdot v_{1}\equiv -s\cdot s~(\text{mod }2^m),~~~
v_{1}\cdot v_{2}\equiv  -t\cdot s~(\text{mod }2^m)\text{~~ and ~~} v_{2}\cdot v_{2}\equiv  -t\cdot t~(\text{mod }2^m).$\item[$(\mathbf{P_3)}$] $o(v_1)\equiv \left\{\begin{array}{ll}
                  0~(\text{mod }2) & \text{if } \co_0^{\perp}/\co_0 \text{ is the Klein 4-group;} \\
                  0~(\text{mod }4) & \text{if } \co_0^{\perp}/\co_0 \text{ is a cyclic group of order 4}
                \end{array}\right.  \text{ and }
o(v_2)\equiv 0~(\text{mod }2)$ provided $m \geq 2.$\end{enumerate}
 Here the positive integer $k$ is to be chosen suitably depending upon the existence of vectors $v_1$ and $v_2$ in $\z^k.$ Now if $(v_i,\co_i)=\left\{\left(v_{i},c_{i}\right): c_{i} \in \co_{i}\right\}$ for $1 \leq i \leq 2,$ then the set  \vspace{-2mm}$$\co^*=
\langle (v_1,\co_1) \cup (v_2,\co_2) \rangle=\{(iv_1+jv_2,c_{ij}): 1 \leq i \leq o(v_1), ~1 \leq j \leq o(v_2), c_{ij} \in \co_{\eta(i,j)}\}\vspace{-2mm}$$ is a self-orthogonal code of length $n+k$ over $\z.$
Moreover, we have \vspace{-2mm}$$|\co^*|=o(v_1)o(v_2)|\co_0|=\frac{1}{2}o(v_1)o(v_2)|\co|=o(v_1)o(v_2)2^{\frac{mn}{2}-1}.\vspace{-2mm}$$
\end{prop}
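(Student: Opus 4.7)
The plan is to describe every element of $\co^*$ in a canonical form, verify self-orthogonality by a direct bilinear expansion in which $(\mathbf{P_2})$ cancels the surviving terms, and use $(\mathbf{P_1})$ to count. Property $(\mathbf{P_3})$ will enter only to ensure that the description in the statement is internally consistent.

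First I would observe that $\co^*$ contains the subgroup $(0, \co_0)$: for $c_1, c_1' \in \co_1$, $(v_1, c_1) - (v_1, c_1') = (0, c_1 - c_1')$ with $c_1 - c_1' \in \co_0$. Hence a generic element of $\co^*$ has the form $(iv_1 + jv_2,\, is + jt + c_0)$ for some integers $i, j$ and some $c_0 \in \co_0$, and by Lemma \ref{c} its second component lies in $\co_{\eta(i,j)}$. Shifting $i$ by $o(v_1)$ or $j$ by $o(v_2)$ preserves the first component; $(\mathbf{P_3})$ is precisely what is needed to also preserve the coset $\co_{\eta(i,j)}$, since $\eta$ depends only on $(i \bmod 2,\, j \bmod 2)$ in the Klein case and only on $(i + 2j) \bmod 4$ in the cyclic case. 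Thus the parameterization in the statement is unambiguous.

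The heart of the proof is self-orthogonality. Taking $X = (iv_1 + jv_2,\, is + jt + c_0)$ and $X' = (i'v_1 + j'v_2,\, i's + j't + c_0')$ and expanding bilinearly, I obtain
$$X \cdot X' = ii'(v_1 \cdot v_1 + s \cdot s) + (ij' + i'j)(v_1 \cdot v_2 + s \cdot t) + jj'(v_2 \cdot v_2 + t \cdot t) + R,$$
where $R$ collects all pairings that involve some $c_0$ or $c_0'$. Each such pairing puts a vector of $\co_0$ against a vector of $\co_0^\perp$, using $s \in \co_0^\perp$, $t \in \co = \co^\perp \subseteq \co_0^\perp$, and $c_0, c_0' \in \co_0 \subseteq \co_0^\perp$; so every summand in $R$ is $\equiv 0 \pmod{2^m}$. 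By $(\mathbf{P_2})$ each parenthesized coefficient is also $\equiv 0 \pmod{2^m}$, and hence $X \cdot X' \equiv 0 \pmod{2^m}$, giving $\co^* \subseteq (\co^*)^\perp$.

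For the cardinality, $(\mathbf{P_1})$ says that $(i,j) \mapsto iv_1 + jv_2$ is injective on any fixed system of representatives modulo $o(v_1)$ and $o(v_2)$, producing $o(v_1) o(v_2)$ distinct first components; for each such pair, the $|\co_0|$ choices of $c_0$ give distinct second components, so $|\co^*| = o(v_1) o(v_2) |\co_0|$, and Lemma \ref{a} rewrites this as $o(v_1) o(v_2) \cdot 2^{mn/2 - 1}$. The main obstacle I anticipate is the cyclic branch of $\eta$, where $s$ and $t$ are not independent modulo $\co_0$ but satisfy $t \equiv 2s$; the identification $is + jt \equiv (i+2j)s$ has to be propagated consistently through both the coset analysis of $\co_{\eta(i,j)}$ and the verification that the $4 \mid o(v_1)$ half of $(\mathbf{P_3})$ is exactly what makes the parameterization well-defined.
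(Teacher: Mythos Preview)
Your argument is correct and mirrors the paper's proof: both verify self-orthogonality by expanding the inner product and cancelling via $(\mathbf{P_2})$ against the vanishing of every pairing that involves an element of $\co_0$, and both then use $(\mathbf{P_3})$ to reduce $(i,j)$ modulo $(o(v_1),o(v_2))$ and $(\mathbf{P_1})$ to obtain the count. The only cosmetic differences are that the paper checks orthogonality on the generating pairs $(v_1,c_1),(v_2,c_2)$ rather than on arbitrary $X,X'$, and establishes $|\co^*|$ through a three-case disjointness analysis of the blocks $(iv_1+jv_2,\co_{\eta(i,j)})$ instead of your direct injectivity; the cyclic branch you flag as a potential obstacle is absorbed exactly as you outline and causes no difficulty in either proof.
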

\begin{proof} First of all, we will show that the code $\co^*$ is a self-orthogonal code.
For this, we see that it is enough to prove that \vspace{-2mm}\begin{equation}\label{k}(v_1,c_1)\cdot (v_1,c_1)=(v_1,c_1)\cdot (v_2,c_2)=(v_2,c_2)\cdot (v_2,c_2)=0 \end{equation}for all $c_1 \in \co_1$ and $c_2 \in \co_2.$

To prove this, let $c_1 \in \co_1$ and $c_2 \in \co_2$ be arbitrarily fixed. Since $\co_1=s+\co_0$ and $\co_2=t+\co_0,$ we write $c_1=s+c_0$ and $c_2 =t+c'_0$ for some $c_0, ~c'_0 \in \co_0.$ Also $\co_0^{\perp}=\co_0 \cup \co_1 \cup \co_2 \cup \co_3,$ $s \in \co_1$ and $t \in \co_2$  imply that $c_0 \cdot c_0\equiv c'_0 \cdot c'_0 \equiv c_0 \cdot c'_0\equiv c_0 \cdot t \equiv c'_0 \cdot t \equiv c_0 \cdot s \equiv c'_0 \cdot s\equiv 0~(\text{mod }2^m).$ This gives $
c_1\cdot c_1\equiv s\cdot s~(\text{mod }2^m),~c_1\cdot c_2\equiv s\cdot t~(\text{mod }2^m)$ and $c_2\cdot c_2\equiv t\cdot t~(\text{mod }2^m).$  Now as the vectors $v_1$ and $v_2$ satisfy the property $(\mathbf{P}_2)$, we have
$v_{1}\cdot v_{1}\equiv -s\cdot s~(\text{mod }2^m),~
v_{1}\cdot v_{2}\equiv  -t\cdot s~(\text{mod }2^m)$ and $ v_{2}\cdot v_{2}\equiv  -t\cdot t~(\text{mod }2^m).$
From this, we obtain $v_{1}\cdot v_{1}\equiv -c_1\cdot c_1~(\text{mod }2^m),~
v_{1}\cdot v_{2}\equiv  -c_2\cdot c_1~(\text{mod }2^m)$ and $ v_{2}\cdot v_{2}\equiv  -c_2\cdot c_2~(\text{mod }2^m),$ which implies \eqref{k}.

We next assert that \vspace{-4mm}$$\co^*=\{(iv_1+jv_2,c_{ij}):1\leq i\leq o(v_1),~1\leq j\leq o(v_2),~c_{ij} \in \co_{\eta(i,j)}\}=\displaystyle\bigcup_{i=1}^{o(v_1)}
\bigcup_{j=1}^{o(v_2)}(iv_1+jv_2, \co_{\eta(i,j)}),\vspace{-2mm}$$ where $(iv_1+jv_2, \co_{\eta(i,j)})= \{(iv_1+jv_2,c_{ij}): c_{ij} \in \co_{\eta(i,j)}\}$ for each $i$ and $j.$

To prove this assertion, we first note that \vspace{-2mm}$$o(\co_1)=\left\{\begin{array}{ll} 2 & \text{if }\co_0^{\perp}/\co_0 \text{ is the Klein 4-group;}\\ 4 & \text{if }\co_0^{\perp}/\co_0 \text{ is a cyclic group of order 4} \end{array}\right.\vspace{-2mm}$$ and  $o(\co_2)=2,$ where $o(\co_i)$ ($i=1,2$) denotes the order of $\co_i \in \co_0^{\perp}/\co_0.$ Now as the vectors $v_1$ and $v_2$ satisfy the property ($\mathbf{P}_2$), we see that $o(\co_1)$ divides $o(v_1)$ and $o(\co_2)$ divides $o(v_2),$ which gives $o(v_1)\co_1=\co_0$ and $o(v_2)\co_2=\co_0.$ Also note that $\co_1+\co_0=\co_1$ and $\co_2+\co_0=\co_2.$ From this, it follows that for every $i \equiv i_1~(\text{mod }o(v_1))$ and $j \equiv j_1 ~(\text{mod }o(v_2))$ with $1 \leq i_1 \leq o(v_1)$ and $1 \leq j_1 \leq o(v_2),$ we have $i(v_1,\co_1)=i_1(v_1,\co_1)$ and $j(v_2,\co_2)=j_1(v_2,\co_2).$ Therefore the set $\co^*$ reduces to $\co^*=\{i(v_1,c_1)+j(v_2,c_2):1\leq i\leq o(v_1),~1\leq j\leq o(v_2),~c_1\in\co_1,~c_2\in\co_2\}.$ Also by Lemma \ref{c},  for every $c_1 \in \co_1$ and $c_2 \in \co_2,$ we note that $ic_1 +jc_2 \in \co_{\eta(i,j)}$ for each $i$ and $j,$ which proves the assertion.

Next to prove that $|\co^*|=o(v_1)o(v_2)|\co_0|=o(v_1)o(v_2)2^{\frac{mn}{2}-1},$ we need to show that the sets   $ (iv_1+jv_2,\co_{\eta_{i,j}})$ ($1 \leq i \leq o(v_1),$ $1 \leq j \leq o(v_2)$) are distinct (and hence disjoint) in $\z^{n+k},$ as $|\co_{\eta(i,j)}|=|\co_0|=\frac{1}{2}|\co|=2^{\frac{mn}{2}-1}$ for each $i$ and $j.$ To prove this, we
consider the following three cases separately:
\textbf{(i)} $i=o(v_1)$ and $1 \leq j \leq o(v_2),$ \textbf{(ii)} $1 \leq i \leq o(v_1)-1$ and $j=o(v_2),$ \textbf{(iii)} $1 \leq i \leq o(v_1)-1$ and $1 \leq j \leq o(v_2)-1.$
\begin{description}
  \item[Case (i):] Let $i=o(v_1)$ and $1\leq j\leq o(v_2).$ Here we have $(iv_1+jv_2,i\co_1+j\co_2)=(jv_2,i\co_1+j\co_2).$ Working as earlier, we see that $o(\co_1)$ divides $o(v_1),$ which gives $o(v_1)\co_1=\co_0.$ This implies that $(iv_1+jv_2,i\co_1+j\co_2)=(jv_2,j\co_2),$ as $\co_0+j\co_2=j\co_2$ for each $j.$ Further it is easy to see that the sets $(jv_2,j\co_2)$ ($1 \leq j \leq o(v_2)$) are disjoint in $\z^{n+k}.$
  \item[Case (ii):] Let $1\leq i\leq o(v_1)-1$ and $j=o(v_2).$ Here we have $(iv_1+jv_2,i\co_1+j\co_2)=(iv_1,i\co_1+j\co_2).$ As the vector $v_2$ satisfies the property $(\mathbf{P_3}),$  $o(\co_2)=2$ divides $o(v_2).$ This gives $o(v_2)\co_2=\co_0,$ which implies that $(iv_1+jv_2,i\co_1+j\co_2)=(iv_1,i\co_1),$ as $i\co_1+\co_0=i\co_1$ for each $i.$ Also it is easy to see that the sets $(iv_1,i\co_1)$ ($1\leq i\leq o(v_1)-1$) are disjoint in $\z^{n+k}.$
  \item[Case (iii):]  Let $1\leq i\leq o(v_1)-1$ and $1\leq j\leq o(v_2)-1.$ Here we need to show that the sets $(iv_1+jv_2,\co_{\eta(i,j)})~$ ($1 \leq i \leq o(v_1)-1$ and $1 \leq j \leq o(v_2)-1$) are disjoint in $\z^{n+k}.$ For this, suppose (if possible) that  $(iv_1+jv_2,\co_{\eta(i,j)})=(i'v_1+j'v_2,\co_{\eta(i'j')})$ holds for some $1\leq i'\leq o(v_1)-1$ and $1\leq j'\leq o(v_2)-1.$ This gives $((i-i')v_1+(j-j')v_2,\co_{\eta(i,j)}-\co_{\eta(i',j')})=(\mathbf{0},\co_0),$ which gives $(i-i')v_1+(j-j')v_2=0.$  This implies that $r_1v_1+r_2v_2=0,$ where $\left[i-i'\right]_{o(v_1)}=r_1$ and $\left[j-j'\right]_{o(v_2)}=r_2.$ As the vectors $v_1$ and $v_2$ satisfy the property $(\mathbf{P_1}),$ we have $r_1=0$ and $r_2=0.$ Further since $1 \leq i,i' \leq o(v_1)-1$ and $1 \leq j, j' \leq o(v_2)-1,$ we must have $r_1=|i-i'|$ and $r_2=|j-j'|,$ which gives $i=i'$ and $j=j'.$ From this, it follows that the sets $(iv_1+jv_2,\co_{\eta(i,j)})~$ ($1\leq i\leq o(v_1)-1$ and $1\leq j\leq o(v_2)-1$) are disjoint in $\z^{n+k}.$
\end{description}
This completes the proof of the proposition.\end{proof}
\begin{rem} When $m=1,$ the binary code $\co^*$ (if it exists) is given by $\co^*=\left(\textbf{0},\co_0\right)\cup \left(v_1,\co_1 \right) \cup \left(v_2,\co_2\right) \cup \left(v_1+v_2,\co_3\right),$ as $\co_1+\co_2=\co_3.$ (Note that $(v_1+v_2,\co_3)=\{(v_1+v_2,c_3): c_3 \in \co_3\}.$)
\end{rem}
From now onwards, we will follow the same notations as in Proposition \ref{p}, and we will consider the subscripts of $X$'s modulo $2^m.$

Next by Lemma \ref{a}, we see that self-dual codes of even length $n$ over $\z$ exist for all $m \geq 1,$ and self-dual codes of odd length $n$ over $\z$ exist only when $m$ is even.

In the following theorem, we construct Type I and Type II codes (of higher lengths) over $\z$ from Type I codes of even length $n$ over $\z$ for all $m \geq 1,$ and also determine their complete weight enumerators.
\begin{theo}\label{theosneven}
Let $\co$ be a Type I code of even length $n$ over $\z$ ($m \geq 1$ is an integer) and $\co_0=\{c \in \co: wt_E(c) \equiv 0~(\text{mod }2^{m+1})\}.$ Here we have $\eta(i,j)=[i]_2+2[j]_2$ for $0 \leq i,j \leq 2^m-1,$ and  the following hold:
\begin{description}
 \vspace{-2mm} \item[I.] Let $n\equiv 2~(\text{mod }4).$
  \begin{itemize}
 \vspace{-2mm} \item[(a)] Let $v_{1},v_{2} \in \z^2$  be chosen as
  \vspace{-2mm}\begin{eqnarray*}
  \begin{array}{ll}
 v_{1}=\left\{\begin{array}{ll}(2^{\f-1},2^{\f-1}) & \text{if } m \text{ is even;}\\
  (2^{\fo},0) & \text{if } m \text{ is odd}\end{array} \right. & \text{ and ~~~}
    v_{2}=\left\{\begin{array}{ll}(2^{\f},0) & \text{if } m \text{ is even;}\\
  (2^{\fo},2^{\fo}) & \text{if } m \text{ is odd.}\end{array} \right.
  \end{array}
  \end{eqnarray*}
Then $\co^*$ is a self-dual code of length $n+2.$ Furthermore, $\co^*$ is a Type II code when $n\equiv 6~(\text{mod }8)$  and $\co^*$ is a Type I code when $n\equiv 2~(\text{mod }8).$ The complete weight enumerator $cwe_{\co^*}(X_\mu:\mu\in\z)$ of $\co^*$ is given by
\small     \vspace{-2mm} $$cwe_{\co^*}(X_\mu:\mu\in\z)=\left\{\begin{array}{ll}\sum\limits_{i=1}^{2^{\f+1}}\sum\limits_{j=1}^{2^{\f}}
      X_{(i+2j)2^{\f-1}}X_{i2^{\f-1}}cwe_{\co_{\eta(i,j)}}(X_\mu:\mu\in\z)& \text{if }m\text{ is even;}\vspace{2mm}\\\sum\limits_{i=1}^{2^{\fe}}
      \sum\limits_{j=1}^{2^{\fe}}X_{(i+j)2^{\fo}}
      X_{j2^{\fo}}cwe_{\co_{\eta(i,j)}}(X_\mu:\mu\in\z)&
      \text{if }m \text{ is odd.}\end{array}\right.\vspace{-2mm}$$
 \normalsize
 \vspace{-2mm}\item[(b)] Let $v_{1},v_{2} \in \z^6$ and $w_1,w_2,w_3,w_4 \in \z^{n+6}$ be chosen as \vspace{-2mm}\begin{eqnarray*}
 v_{1}&=&\left\{\begin{array}{ll}(2^{\f-1},2^{\f-1},2^{\f-1},2^{\f-1},2^{\f-1},2^{\f-1}) & \text{if } m \text{ is even;} \\(2^{\fo},0,2^{\fo},0,2^{\fo},0) & \text{if }m \text{ is odd,} \end{array}\right.\allowdisplaybreaks\\
 v_{2}&=&\left\{\begin{array}{ll}(2^{\f},0,0,0,0,0) & \text{if } m \text{ is even;} \\(2^{\fo},2^{\fo},0,0,0,0) & \text{if }m \text{ is odd,} \end{array}\right.\\
 w_1&=&\left\{\begin{array}{ll}(2^{\f},0,2^{\f},0,0,0,0,\cdots,0) & \text{if } m \text{ is even;} \\(2^{\fo},2^{\fo},2^{\fo},2^{\fo},0,0,0,\cdots,0) & \text{if }m \text{ is odd,} \end{array}\right.\\
 w_2&=&\left\{\begin{array}{ll}(0,2^{\f},0,2^{\f},0,0,0,\cdots,0) & \text{if } m \text{ is even;} \\(2^{\fe},0,0,0,0,0,0,\cdots,0) & \text{if }m \text{ is odd,} \end{array}\right.\\
  w_3&=&\left\{\begin{array}{ll}(0,0,2^{\f},0,2^{\f},0,0,\cdots,0) & \text{if } m \text{ is even;} \\(0,0,2^{\fo},2^{\fo},2^{\fo},2^{\fo},0, \cdots,0) & \text{if }m \text{ is odd,} \end{array}\right.\\
w_4&=&\left\{\begin{array}{ll}(0,0,0,2^{\f},0,2^{\f},0,\cdots,0) & \text{if } m \text{ is even;} \\(0,2^{\fe},0,0,0,0,0,\cdots,0) & \text{if }m \text{ is odd.} \end{array}\right.\end{eqnarray*}
 Then the code $\co'=\left< \co^* \cup \{w_1,w_2,w_3,w_4\}\right>$ is a self-dual code of length $n+6.$ Furthermore, the code $\co'$ is a Type II code when $n\equiv 2~(\text{mod }8)$ and $\co'$ is a Type I code when $n\equiv 6~(\text{mod }8).$ The complete weight enumerator $cwe_{\co'}(X_\mu:\mu\in\z)$ of $\co'$ is given by \small
\vspace{-2mm}$$cwe_{\co'}(X_\mu:\mu\in\z)=\left\{\begin{array}{ll}\sum_1 X_{(i+2j+2k_1)2^{\f-1}}X_{(i+2k_2)2^{\f-1}}
      X_{(i+2(k_1+k_3))2^{\f-1}}\\X_{(i+2(k_2+k_4))2^{\f-1}} X_{(i+2k_3)2^{\f-1}}X_{(i+2k_4)2^{\f-1}}\\cwe_{\co_{\eta(i,j)}}(X_\mu:\mu\in\z) & \text{if } m \text{ is even;}\vspace{2mm}\\
      \sum_2 X_{(i+j+k_1+2k_2)2^{\fo}}X_{(j+k_1+2k_4)2^{\fo}}
      X_{(i+k_1+k_3)2^{\fo}}\\X_{(k_1+k_3)2^{\fo}} X_{(i+k_3)2^{\fo}}X_{k_32^{\fo}}cwe_{\co_{\eta(i,j)}}(X_\mu:\mu\in\z) &\text{if }m\text{ is odd,}\end{array}\right.\vspace{-1mm}$$ \normalsize where the summation $\sum_1$ runs over all integral 6-tuples $(i,j,k_1,k_2,k_3,k_4)$ satisfying $1\leq j,k_1,k_2,\\k_3,k_4\leq {2^{\f}}$ and $1\leq i\leq {2^{\f+1}},$ whereas the summation $\sum_2$ runs over all integral 6-tuples $(i,j,k_1,k_2,k_3,k_4)$ satisfying $1\leq i,j,k_1,k_{3}\leq {2^{\fe}}$ and $1\leq k_2,k_{4}\leq {2^{\fo}}.$
\end{itemize}
\vspace{-2mm}\item[II.] Let $n\equiv 0~(\text{mod }4).$
   \begin{itemize}
  \vspace{-2mm}\item[(a)] Let $v_{1},v_{2} \in \z^4$ and $w_1,w_2 \in \z^{n+4}$ be chosen as \vspace{-2mm}\begin{eqnarray*}
 v_{1}&=&\left\{\begin{array}{ll}(2^{\f-1},2^{\f-1},2^{\f-1},2^{\f-1}) & \text{if } m \text{ is even;} \\(2^{\fo},0,2^{\fo},0) & \text{if }m \text{ is odd,} \end{array}\right.\allowdisplaybreaks\\
 v_{2}&=&\left\{\begin{array}{ll}(2^{\f},0,0,0) & \text{if } m \text{ is even;} \\(2^{\fo},2^{\fo},0,0) & \text{if }m \text{ is odd,} \end{array}\right.\allowdisplaybreaks\\
 w_1&=&\left\{\begin{array}{ll}(2^{\f},2^{\f},0,0,0,\cdots,0) & \text{if } m \text{ is even;} \\(2^{\fo},2^{\fo},2^{\fo},2^{\fo},0,\cdots,0) & \text{if }m \text{ is odd,} \end{array}\right.\\
 w_2&=&\left\{\begin{array}{ll}(0,0,2^{\f},2^{\f},0,\cdots,0) & \text{if } m \text{ is even;} \\(2^{\fe},0,0,0,0,\cdots,0) & \text{if }m \text{ is odd.} \end{array}\right.
 \end{eqnarray*} Then $\co'=\left< \co^* \cup \{w_1,w_2\}\right>$ is a self-dual code of length $n+4$ over $\z.$ Furthermore, $\co'$ is a Type II code when $n\equiv 4~(\text{mod }8)$ and $\co'$ is a Type I code when $n\equiv 0~(\text{mod }8).$ The complete weight enumerator $cwe_{\co'}(X_\mu:\mu\in\z)$ of $\co'$ is given by
\vspace{-2mm}$$cwe_{\co'}(X_\mu:\mu\in\z)=\left\{\begin{array}{ll}\sum_1 X_{(i+2j+2k_1)2^{\f-1}}X_{(i+2k_1)2^{\f-1}}\\
      X_{(i+2k_2)2^{\f-1}}^2cwe_{\co_{\eta(i,j)}}(X_\mu:\mu\in\z)& \text{if }m \text{ is even;}\vspace{2mm}\\\sum_2
      X_{(i+j+k_1+2k_2)2^{\fo}}X_{(j+k_1)2^{\fo}}
      X_{(i+k_1)2^{\fo}}\\X_{k_12^{\fo}}cwe_{\co_{\eta(i,j)}}(X_\mu:\mu\in\z) & \text{if }m \text{ is odd,}\end{array}\right.\vspace{-2mm}$$
 where the summation $\sum_1$ runs over all integral 4-tuples $(i,j,k_1,k_2)$ satisfying $1\leq j,k_1,k_2\leq 2^{\f}$ and $1\leq i\leq 2^{\f+1},$ whereas the summation $\sum_2$ runs over all integral 4-tuples $(i,j,k_1,k_2)$ satisfying $1\leq i,j,k_1\leq {2^{\fe}}$ and $1\leq k_2\leq {2^{\fo}}.$
   \vspace{-1mm}\item[(b)] Let $v_{1},v_{2} \in \z^8$ and $w_1,w_2,w_3,w_4,w_5,w_6 \in \z^{n+8}$ be chosen as
  \vspace{-2mm}\begin{eqnarray*}
 v_{1}&=&\left\{\begin{array}{ll}(2^{\f-1},2^{\f-1},2^{\f-1},2^{\f-1},2^{\f-1},
  2^{\f-1},2^{\f-1},2^{\f-1}) & \text{if } m \text{ is even;} \\(2^{\fo},0,2^{\fo},0,2^{\fo},0,2^{\fo},0) & \text{if }m \text{ is odd,} \end{array}\right.\allowdisplaybreaks\\
  v_{2}&=&\left\{\begin{array}{ll}(2^{\f},0,0,0,0,0,0,0)& \text{if } m \text{ is even;} \\(2^{\fo},2^{\fo},0,0,0,0,0,0) & \text{if }m \text{ is odd,} \end{array}\right.\allowdisplaybreaks\\
 w_1&=&\left\{\begin{array}{ll}(2^{\f},0, 2^{\f},0,\cdots,0) & \text{if } m \text{ is even;} \\(2^{\fo},2^{\fo},2^{\fo},2^{\fo},0,\cdots,0) & \text{if }m \text{ is odd,} \end{array}\right.\\
 w_2&=&\left\{\begin{array}{ll}(0,2^{\f},0, 2^{\f},0,\cdots,0) & \text{if } m \text{ is even;} \\(0,0,2^{\fo},2^{\fo},2^{\fo},2^{\fo},0,\cdots,0) & \text{if }m \text{ is odd,} \end{array}\right.\allowdisplaybreaks\\
 w_3&=&\left\{\begin{array}{ll} (0,0,2^{\f},0,2^{\f},0,\cdots,0) & \text{if } m \text{ is even;} \\(0,0,0,0,2^{\fo},2^{\fo},2^{\fo},2^{\fo},0,\cdots,0) & \text{if }m \text{ is odd,} \end{array}\right.\allowdisplaybreaks\\
 w_4&=&\left\{\begin{array}{ll}(0,0,0,2^{\f},0, 2^{\f},0,\cdots,0) & \text{if } m \text{ is even;} \\(2^{\fe},0,0,\cdots,0) & \text{if }m \text{ is odd,} \end{array}\right.\allowdisplaybreaks\\
 w_5&=&\left\{\begin{array}{ll}(0,0,0,0,2^{\f},0, 2^{\f},0,\cdots,0) & \text{if } m \text{ is even;} \\(0,2^{\fe},0,0,\cdots,0) & \text{if }m \text{ is odd,} \end{array}\right.\\
 w_6&=&\left\{\begin{array}{ll}(0,0,0,0,0,2^{\f},0, 2^{\f},0,\cdots,0) & \text{if } m \text{ is even;} \\(0,0,2^{\fe},0,0,\cdots,0) & \text{if }m \text{ is odd.} \end{array}\right.
 \end{eqnarray*}
 Then $\co'=\left<\co^* \cup \{w_1,w_2,w_3,w_4,w_5,w_6\} \right>$ is a self-dual code of length $n+8$ over $\z.$ Furthermore, $\co'$ is a Type II code of length $n+8$ over $\z$ when $n\equiv 0~(\text{mod }8)$ and $\co'$ is a Type I code when $n\equiv 4~(\text{mod }8).$ The complete weight enumerator $cwe_{\co'}(X_\mu:\mu\in\z)$ of $\co'$ is given by \small
      \vspace{-2mm}\begin{eqnarray*}
      cwe_{\co'}(X_\mu:\mu\in\z)=\left\{\begin{array}{ll}\sum_1 X_{(i+2j+2k_1)2^{\f-1}}X_{(i+2k_2)2^{\f-1}}
      X_{(i+2(k_1+k_3))2^{\f-1}}\\X_{(i+2(k_2+k_4))2^{\f-1}}X_{(i+2(k_3+k_5))2^{\f-1}}X_{(i+2(k_4+k_6))2^{\f-1}}\\ X_{(i+2k_5)2^{\f-1}}X_{(i+2k_6)2^{\f-1}} cwe_{\co_{\eta(i,j)}}(X_\mu:\mu\in\z) & \text{if } m \text{ is even;}\vspace{2mm}\\\sum_2 X_{(i+j+k_1+2k_4)2^{\fo}}X_{(j+k_1+2k_5)2^{\fo}}
      X_{(i+k_1+k_2+2k_6)2^{\fo}}
      \\X_{(k_1+k_2)2^{\fo}}X_{(i+k_2+k_3)2^{\fo}}
      X_{(k_2+k_3)2^{\fo}}X_{(i+k_3)2^{\fo}}\\
      X_{k_32^{\fo}}cwe_{\co_{\eta(i,j)}}(X_\mu:\mu\in\z) & \text{if }m \text{ is odd,}\end{array}\right.\end{eqnarray*}\normalsize
 where the summation $\sum_1$ runs over all integral 8-tuples $(i,j,k_1,k_2,k_3,k_4,k_5,k_6)$ satisfying $1\leq j,k_1,k_2,k_3,k_4,k_5,k_6\leq {2^{\f}}$ and $1\leq i\leq {2^{\f+1}},$ whereas the summation $\sum_2$ runs over all integral 8-tuples $(i,j,k_1,k_2,k_3,k_4,k_5,k_6)$ satisfying $1\leq i,j,k_1,k_2,k_{3}\leq {2^{\fe}}$ and $1\leq k_4,k_{5},k_6\leq {2^{\fo}}.$
\end{itemize}
\end{description}
\end{theo}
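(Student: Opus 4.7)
The plan is to handle the four subcases I(a), I(b), II(a), II(b) in a uniform way: first produce a self-orthogonal code $\co^*$ via Proposition \ref{p} and, in the (b)-subcases (and in II(a)), extend it by the auxiliary vectors $w_i$; then confirm self-duality by a cardinality count, classify $\co'$ as Type I or Type II by tracking Euclidean weights modulo $2^{m+1}$, and finally read off the complete weight enumerator from the explicit parametrization of codewords.

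The first step is to check that the chosen $v_1, v_2$ satisfy the hypotheses of Proposition \ref{p}. Property $(\mathbf{P_2})$ is a direct comparison of $v_1\cdot v_1$, $v_1\cdot v_2$, $v_2\cdot v_2$ (which are immediate from the explicit coordinates of $v_1, v_2 \in \z^k$ — they are always $0$ or $2^{m-1}$ modulo $2^m$) against the values $-s\cdot s$, $-s\cdot t$, $-t\cdot t$ given by Theorem \ref{theosorel}(a) for $n\equiv 2~(\text{mod }4)$ or (b) for $n\equiv 0~(\text{mod }4)$. Property $(\mathbf{P_1})$ reduces to solving $\alpha v_1+\beta v_2=0$ coordinate-by-coordinate and is immediate from the shape of $v_1, v_2$. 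Property $(\mathbf{P_3})$ follows because $o(v_1)\in\{2^{\f+1},2^{\fe}\}$ and $o(v_2)\in\{2^{\f},2^{\fe}\}$, each divisible by the required power of $2$. Proposition \ref{p} then gives a self-orthogonal code $\co^*$ of length $n+k$ and cardinality $o(v_1)o(v_2)2^{\frac{mn}{2}-1}$.

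For self-duality one simply checks that this cardinality equals $2^{\frac{m(n+k)}{2}}$, as it does in I(a) with $k=2$. In the other three subcases the code must be enlarged by the stated vectors $w_1,\ldots$, for which I would verify (i) each $w_i$ is orthogonal to every generator of $\co^*$ and to every other $w_j$, and (ii) the $w_i$'s are independent modulo $\co^*$. Both reduce to localized coordinate computations because each $w_i$ is supported on a short block of the $k$ new coordinates; one then obtains $|\co'|=2^r|\co^*|=2^{\frac{m(n+k)}{2}}$, where $r$ is the number of $w_i$'s adjoined, giving self-duality.

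To distinguish Type I from Type II I would use the identity $wt_E(c)\equiv c\cdot c~(\text{mod }2^{m+1})$, which holds because $(2^m-a)^2\equiv a^2~(\text{mod }2^{m+1})$ for every $a\in\z$. Applied to a codeword $(iv_1+jv_2+\sum_l k_l w_l,\, c_{ij})$ of $\co'$, its Euclidean weight modulo $2^{m+1}$ is governed by $c_{ij}\cdot c_{ij}~(\text{mod }2^{m+1})$, which equals $0$ when $c_{ij}\in\co_0$, $2^m$ when $c_{ij}\in\co_2$, and the shadow value $2^{m-2}n$ of Lemma \ref{2.2} when $\eta(i,j)\in\{1,3\}$, plus the pairwise dot products of $v_1, v_2, w_i$ modulo $2^{m+1}$. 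Divisibility of the total by $2^{m+1}$ then hinges on $n$ modulo $8$ exactly as claimed.

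Finally, the complete weight enumerator is obtained by enumerating each codeword of $\co'$ in the unique form above, giving $cwe_{\co'}=\sum cwe_{\co_{\eta(i,j)}}(X_\mu:\mu\in\z)\prod_{r=1}^{k}X_{u_r}$, where $u_1,\ldots,u_k$ are the coordinates of the new-coordinate part $iv_1+jv_2+\sum_l k_l w_l$. The main obstacle will be the bookkeeping in subcase II(b), where six vectors $w_i$ combine on $8$ new coordinates and the support pattern depends delicately on the parity of $m$; once the component-wise expansion of $iv_1+jv_2+\sum_l k_l w_l$ is tabulated, each $X_{\cdot}$-factor in the stated formula matches exactly one coordinate of that generator sum and the result drops out.
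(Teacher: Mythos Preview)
Your plan is essentially the paper's own proof: verify $(\mathbf{P_1})$--$(\mathbf{P_3})$ for the chosen $v_1,v_2$ against the values supplied by Theorem \ref{theosorel}, invoke Proposition \ref{p}, extend by the $w_l$'s, count, classify by Euclidean weights, and read off the enumerator from the explicit parametrization. One step as written would fail, however. You assert $|\co'|=2^r|\co^*|$ with $r$ the \emph{number} of adjoined vectors $w_l$; this is correct only for $m=1$. For general $m$ each $w_l$ has additive order a proper power of $2$ (e.g.\ $2^{m/2}$ when $m$ is even), and the required identity is $|\co'|=\big(\prod_l o(w_l)\big)\,|\co^*|$. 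In I(b), for instance, the four $w_l$'s contribute a factor $\prod_l o(w_l)=2^{2m}$, not $2^4$, and it is precisely this $2^{2m}$ that bridges $|\co^*|=2^{m(n+2)/2}$ and the self-dual size $2^{m(n+6)/2}$. So ``independence modulo $\co^*$'' must be taken to mean that $\langle w_1,\dots,w_r\rangle$ has full order $\prod_l o(w_l)$ and meets $\co^*$ trivially, which is what the paper actually checks.

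For the Type I/II classification the paper uses a shortcut that is cleaner than tracking ``pairwise dot products'' with running coefficients $i,j,k_l$. Your own identity $wt_E(c)\equiv c\cdot c\pmod{2^{m+1}}$ immediately gives, for $x,y$ in any self-orthogonal code, $wt_E(x+y)\equiv wt_E(x)+wt_E(y)\pmod{2^{m+1}}$, since the cross term $2\,x\cdot y$ vanishes modulo $2^{m+1}$. Hence $wt_E\bmod 2^{m+1}$ is a homomorphism on $\co'$, and it suffices to evaluate it on the generators $(v_1,c_1)$, $(v_2,c_2)$, and the $w_l$'s individually. The paper does exactly this: $wt_E(v_2)+wt_E(c_2)\equiv 2^m+2^m\equiv 0$, each $wt_E(w_l)\equiv 0$, while $wt_E(v_1)+wt_E(c_1)\equiv 2^{m-1}+2^{m-2}n=2^{m-2}(n+2)$ by Lemma \ref{2.2}, which is $0\pmod{2^{m+1}}$ precisely when $n\equiv 6\pmod 8$ in case I(a), and analogously in the remaining cases.
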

\begin{proof} As $n$ is even, by Lemma \ref{lemsglue}(a), we see that the glue group $\co_0^{\perp}/\co_0$ is the Klein 4-group. Thus by \eqref{n}, we have $\eta(i,j)= \left[i\right]_2+2\left[j\right]_2$ for each $i$ and $j.$ Here we will apply Proposition \ref{p} to construct a self-dual code in each case.
\\{\bf I.} Let $n\equiv 2~(\text{mod }4).$ First of all, we will construct a self-orthogonal code $\co^*$ by applying Proposition \ref{p}. For this, we need to choose a suitable positive integer $k$ and vectors $v_{1}$, $v_{2}$ in $\z^k$ satisfying the properties $(\mathbf{P_1}), (\mathbf{P_2})$ and $(\mathbf{P_3}).$ Now as $s\in\co_1$ and $t\in\co_2,$ by Theorem \ref{theosorel}(a), we have $s\cdot s\equiv 2^{m-1}~(\text{mod }2^m),~s\cdot t\equiv 2^{m-1}~(\text{mod }2^m)$ and $t\cdot t\equiv  0~(\text{mod }2^m).$ Therefore $(\mathbf{P_2})$ becomes
\vspace{-2mm}\begin{eqnarray}\label{eq1}
v_{1}\cdot v_{1} \equiv 2^{m-1}~(\text{mod }2^m),&
v_{1}\cdot v_{2}\equiv  2^{m-1}~(\text{mod }2^m), &
v_{2}\cdot v_{2}\equiv  0~(\text{mod }2^m).
\vspace{-2mm}\end{eqnarray}
\begin{itemize}
\item[(a)] It is clear that  vectors $v_{1}$ and $v_{2}$ satisfy the property $(\mathbf{P_1})$ and congruences \eqref{eq1}. Further, since \begin{eqnarray*}o(v_1)=\left\{\begin{array}{ll}2^{\frac{m}{2}+1}& \text{if }m \text{ is even;}\\2^{\frac{m+1}{2}} & \text{if }m \text{ is odd,} \end{array}\right. & o(v_2)=\left\{\begin{array}{ll}2^{\frac{m}{2}} & \text{if }m \text{ is even;}\\2^{\frac{m+1}{2}} & \text{if }m \text{ is odd}\end{array}\right.\end{eqnarray*} and $\co_0^{\perp}/\co_0$ is the Klein 4-group by Lemma \ref{lemsglue}(a), we see that vectors $v_1$ and $v_2$ also satisfy the property $(\mathbf{P_3}).$ Therefore by Proposition \ref{p}, the code $\co^*$ is a self-orthogonal code of length $n+2$ over $\z.$ As $o(v_1)o(v_2)=2^{m+1},$ we have $|\co^*|=2^{m+1}2^{\frac{mn}{2}-1}=(2^m)^{\frac{n+2}{2}}$ using Proposition \ref{p} again. From this, it follows that $\co^*$ is a self-dual code of length $n+2$ over $\z.$

Next we observe that $wt_E(v_{1})=2^{m-1}$ and $wt_E(v_{2})=2^m.$ Also for every $c_2 \in \co_2,$ we note that $wt_E(c_2) \not\equiv 0~(\text{mod }2^{m+1})$ and  $wt_E(c_2)\equiv c_2\cdot c_2 \equiv 0~(\text{mod }2^m)$ using Theorem \ref{theosorel}(a), which implies that $wt_E(c_2) \equiv 2^m~(\text{mod }2^{m+1}).$ This gives $wt_E(v_{2},c_2)=wt_E(v_{2})+wt_E(c_2)\equiv 2^m+2^m\equiv 0 ~(\text{mod }2^{m+1})$ for every $c_2 \in \co_2.$ On the other hand, from Lemma \ref{2.2}, we have $wt_E(c_1) \equiv 2^{m-2}n ~(\text{mod }2^{m+1})$ for every $c_1 \in \co_1.$ This implies that for every $c_1 \in \co_1,$ we have $wt_E(v_{1},c_1)=wt_E(v_{1})+wt_E(c_1)\equiv 2^{m-1}+2^{m-2}n ~(\text{mod }2^{m+1}),$ which gives  \vspace{-2mm}$$wt_E(v_{1},c_1)\equiv \left\{\begin{array}{ll} 0~(\text{mod }2^{m+1}) & \text{if } n \equiv 6~(\text{mod }8);\\2^m~(\text{mod }2^{m+1}) & \text{if } n \equiv 2~(\text{mod }8). \end{array} \right.\vspace{-2mm}$$
This proves that the code $\co^*$ is a Type II code when $n\equiv 6~(\text{mod }8)$ and $\co^*$ is a Type I code when $n\equiv 2~(\text{mod }8).$

In order to compute the complete weight enumerator of $\co^*,$ by Proposition \ref{p}, we note that any element in $\co^*$ is of the form $c^*=(iv_1+jv_2,c_{ij}),$ where $c_{ij} \in \co_{\eta(i,j)},$ $1\leq i\leq {o(v_1)}$ and $1\leq j\leq {o(v_2)}.$ This gives
\vspace{-2mm}\begin{eqnarray*}
c^*=\left\{\begin{array}{ll}((i+2j)2^{\f-1},i2^{\f-1},c_{ij}) & \text{if }m\text{ is even;}\vspace{2mm}\\((i+j)2^{\fo},
      j2^{\fo},c_{ij})&
      \text{if }m \text{ is odd,}\end{array}\right.
\vspace{-2mm}\end{eqnarray*} where $c_{ij}  \in \co_{\eta(i,j)},$ $1\leq i\leq {o(v_1)}$ and $1\leq j\leq {o(v_2)}.$  From this,  the desired result follows immediately.
\vspace{-2mm}\item[(b)] It is clear that vectors $v_{1}$ and $v_{2}$ satisfy the properties $(\mathbf{P_1}),$ $(\mathbf{P_2})$ and $(\mathbf{P_3}).$ Thus in view of Proposition \ref{p}, we see that the code $\co^*$ is a self-orthogonal code of length $n+6$ over $\z.$ Also since  $o(v_1)o(v_2)=2^{m+1},$ we have $|\co^*|=2^{m+1}2^{\frac{mn}{2}-1}=(2^m)^{\frac{n+2}{2}}$ using Proposition \ref{p} again. Now we need to show that the code $\co'=\left<\co^*\cup \{w_1,w_2,w_3,w_4\}\right>$ is a self-dual code of length $n+6$ over $\z.$ To prove this, we first observe that $w_p \cdot w_q=w_p \cdot c^*=0$ for $1\leq p,q\leq 4$ and for each $c^* \in \co^*.$ This implies that the code $\co'$ is a self-orthogonal code of length $n+6$ over $\z.$ Also it is easy to observe that $|\co'|=o(w_1)o(w_2)o(w_3)o(w_4)|\co^*|,$ where $o(w_p)$ is the additive order of $w_p$ for each $p.$ Now as $o(w_1)o(w_2)o(w_3)o(w_4)=2^{2m},$ we obtain $|\co'|=2^{2m}|\co^*|=(2^m)^{\frac{n+6}{2}}.$ This implies that $\co'$ is a self-dual code of length $n+6$ over $\z.$ Further working in a similar way as in part (a) and using the fact that  $wt_E(w_1)\equiv wt_E(w_2)\equiv wt_E(w_3)\equiv wt_E(w_4)\equiv 0~(\text{mod }2^{m+1}),$ we see that the code $\co'$ is a Type II code when $n\equiv 2~(\text{mod }8)$ and $\co'$ is a Type I code when $n\equiv 6~(\text{mod }8).$

Next to compute the complete weight enumerator of $\co',$ by Proposition \ref{p}, we note that any element in $\co'$ is of the form $c'=(iv_1+jv_2,c_{ij})+k_1w_1+k_2w_2+k_3w_3+k_4w_4,$ where $c_{ij} \in \co_{\eta(i,j)},$ $1\leq i\leq {o(v_1)},$ $1\leq j\leq {o(v_2)}$ and $1\leq k_p\leq o(w_p)$ for $1 \leq p \leq 4.$ Then working in a similar way as in part (a), we obtain the desired result. \end{itemize}
\vspace{-2mm}{\bf II.} Let $n\equiv 0~(\text{mod }4).$ Here also, we will first construct a self-orthogonal code $\co^*$ by choosing a suitable positive integer $k$ and vectors $v_1,$ $v_2$ satisfying the properties $(\mathbf{P_1}),$ $(\mathbf{P_2})$ and $(\mathbf{P_3}).$ Now as $s \in \co_1$ and $t \in \co_2,$ by Theorem \ref{theosorel}(b), we see that $s\cdot s\equiv 0~(\text{mod }2^m),$ $s\cdot t\equiv 2^{m-1}~(\text{mod }2^m)$ and $t\cdot t\equiv  0~(\text{mod }2^m).$ Therefore the property $(\mathbf{P_2})$ becomes \vspace{-2mm}\begin{equation*}
v_{1}\cdot v_{1}\equiv  0~(\text{mod }2^m),~~~
v_{1}\cdot v_{2}\equiv  2^{m-1}~(\text{mod }2^m),~~~
v_{2}\cdot v_{2}\equiv 0~(\text{mod }2^m).\vspace{-2mm}
\end{equation*}
Now working similarly as in part I, one can obtain the desired results.
\end{proof}
Next we proceed to determine Jacobi forms from complete  weight enumerators of Type II codes over $\z$ that are constructed in the above theorem. For this purpose, we need the following theta series defined by Choie and Kim \cite{choie}.
\begin{defin}\cite{choie}\label{defthet}
For each $\mu\in \mathbb{Z}_{2^m},$  the theta series $\theta_{2^{m-1},\mu}$ is a function from $\mathfrak{H} \times \mathbb{C}$ into $\mathbb{C}$ defined as
\vspace{-2mm}$$\theta_{2^{m-1},\mu}(\tau,z)=\sum_{\substack{r\in\mathbb{Z}\\ r\equiv \mu~(\text{mod }2^m)}}q^{\frac{r^2}{2^{m+1}}}\xi^{r},\text{~ where }q=e^{2\pi i\tau}\text{ and }\xi=e^{2\pi i z}.\vspace{-2mm}$$
\end{defin}
\noindent In the following theorem, we obtain some Jacobi forms on the modular group $\Gamma(1).$
\begin{theo}\label{theosnej}
Let $\co$ be a Type I code of even length $n$ over $\z$ and $\co_0=\{c \in \co: wt_E(c) \equiv 0~(\text{mod }2^{m+1})\}.$  Then the following holds:
\begin{itemize}
  \vspace{-2mm}\item[\textup{(a)}] When $n\equiv 6~(\text{mod }8),$ let $\co^*$ be the Type II code of length $n+2$ over $\z$ as defined in  Theorem \ref{theosneven} I\textup{(a)}. Let $cwe_{\co^*}(X_{\mu}:\mu\in\z)$  be as obtained in Theorem \ref{theosneven} I\textup{(a)}. Then
      $cwe_{\co^*}(\theta_{2^{m-1},\mu}(\tau,z):\mu\in \z)$ is a Jacobi form of weight $\frac{n+2}{2}$ and index $(n+2)2^{m-1}$ on $\Gamma(1).$
 \vspace{-2mm}\item[\textup{(b)}] When $n\equiv 2~(\text{mod }8),$ let $\co'$ be the Type II code of length $n+6$ over $\z$ as defined in Theorem \ref{theosneven} I\textup{(b)}.  Let $cwe_{\co'}(X_{\mu}:\mu\in\z)$  be as obtained in Theorem \ref{theosneven} I\textup{(b)}. Then
      $cwe_{\co'}(\theta_{2^{m-1},\mu}(\tau,z):\mu\in \z)$ is a Jacobi form of weight $\frac{n+6}{2}$ and index $(n+6)2^{m-1}$ on $\Gamma(1).$
\vspace{-2mm}\item[\textup{(c)}] When $n\equiv 4~(\text{mod }8),$ let $\co'$ be the Type II code of length $n+4$ over $\z$ as defined in Theorem \ref{theosneven} II\textup{(a)}.  Let $cwe_{\co'}(X_{\mu}:\mu\in\z)$  be as obtained in Theorem \ref{theosneven} II\textup{(a)}. Then
      $cwe_{\co'}(\theta_{2^{m-1},\mu}(\tau,z):\mu\in \z)$ is a Jacobi form of weight $\frac{n+4}{2}$ and index $(n+4)2^{m-1}$ on $\Gamma(1).$
   \vspace{-2mm}\item[\textup{(d)}] When $n\equiv 0~(\text{mod }8),$ let $\co'$ be the Type II code of length $n+8$ over $\z$ as defined in Theorem \ref{theosneven} II\textup{(b)}.  Let $cwe_{\co'}(X_{\mu}:\mu\in\z)$ be as obtained in Theorem \ref{theosneven} II\textup{(b)}. Then
      $cwe_{\co'}(\theta_{2^{m-1},\mu}(\tau,z):\mu\in \z)$ is a Jacobi form of weight $\frac{n+8}{2}$ and index $(n+8)2^{m-1}$ on $\Gamma(1).$
\end{itemize}
\end{theo}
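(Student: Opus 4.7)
The plan is to invoke the theorem of Choie and Kim \cite{choie} which states that if $\mathcal{D}$ is a Type II code of length $N$ over $\z$, then the substitution $X_\mu \mapsto \theta_{2^{m-1},\mu}(\tau,z)$ in $cwe_{\mathcal{D}}(X_\mu:\mu\in\z)$ produces a Jacobi form of weight $N/2$ and index $N\cdot 2^{m-1}$ on $\Gamma(1)=SL(2,\mathbb{Z})$. Once this general principle is available, Theorem \ref{theosnej} reduces to verifying, case by case, that the codes constructed in Theorem \ref{theosneven} are indeed Type II, identifying their lengths, and reading off the corresponding weight and index from the Choie--Kim formula.

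More concretely, I would organize the proof into the four subcases matching parts (a)--(d). In part (a), the hypothesis $n \equiv 6~(\text{mod }8)$ places us in the regime of Theorem \ref{theosneven} I(a), where $\co^*$ is established to be Type II of length $n+2$. Applying the Choie--Kim substitution to $cwe_{\co^*}(X_\mu:\mu\in\z)$ immediately yields a Jacobi form of weight $(n+2)/2$ and index $(n+2)2^{m-1}$. Parts (b), (c), (d) proceed in exactly the same fashion, using the Type II assertions of Theorem \ref{theosneven} I(b), II(a), II(b) for codes $\co'$ of lengths $n+6$, $n+4$, $n+8$ respectively, so that the weights become $\frac{n+6}{2}, \frac{n+4}{2}, \frac{n+8}{2}$ and the indices become $(n+6)2^{m-1}, (n+4)2^{m-1}, (n+8)2^{m-1}$.

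If one wished to be self-contained rather than quoting \cite{choie}, the main verifications behind Choie--Kim's result would be: (i) the transformation behavior of $\theta_{2^{m-1},\mu}(\tau,z)$ under the generators $S=\left(\begin{smallmatrix}0 & -1\\ 1 & 0\end{smallmatrix}\right)$ and $T=\left(\begin{smallmatrix}1 & 1\\ 0 & 1\end{smallmatrix}\right)$ of $\Gamma(1)$, which produces a unitary action permuting the vectors $(\theta_{2^{m-1},\mu})_{\mu\in\z}$; (ii) the observation that the Type II condition $wt_E(c)\equiv 0~(\text{mod }2^{m+1})$ for every codeword $c$ is precisely what is needed for the complete weight enumerator, when evaluated at theta series, to be invariant under the $T$-action (i.e. to absorb the phases arising from $q^{r^2/2^{m+1}}$); (iii) self-duality, together with a MacWilliams-type identity, to handle $S$-invariance; and (iv) the elliptic transformation law under $(\lambda,\epsilon)\in\mathbb{Z}^2$, which follows from the definition of $\theta_{2^{m-1},\mu}$ as a lattice sum with weight $2^{m-1}$.

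The main obstacle, if we are not content merely to cite \cite{choie}, is step (iii): showing $S$-invariance requires a clean MacWilliams identity relating $cwe_{\mathcal{D}}$ and $cwe_{\mathcal{D}^\perp}$ to pair with the Poisson-summation transformation of $\theta_{2^{m-1},\mu}$ under $\tau\mapsto -1/\tau$, and matching the phase factor $e^{-2\pi i u (cz^2/(c\tau+d))}$ of the Jacobi transformation law. Since Choie and Kim have already carried out this analysis in the same generality we need, the cleanest presentation is to state that each constructed code is Type II of the stated length (by Theorem \ref{theosneven}) and then appeal directly to \cite[Theorem]{choie}, leaving the four parts of the statement to follow as immediate specializations.
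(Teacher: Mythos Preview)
Your proposal is correct and matches the paper's own proof essentially verbatim: the paper simply cites Proposition 4.2 and Theorem 4.4 of Choie and Kim \cite{choie} to conclude that for a Type II code of length $\ell$ the substitution $X_\mu\mapsto\theta_{2^{m-1},\mu}(\tau,z)$ in the complete weight enumerator yields a Jacobi form of weight $\ell/2$ and index $\ell\,2^{m-1}$ on $\Gamma(1)$, and then reads off the four cases from Theorem \ref{theosneven}. Your additional sketch of what a self-contained verification would require goes beyond what the paper provides, but the intended argument is the same.
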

\begin{proof} By Proposition 4.2 and Theorem 4.4 of Choie and Kim \cite{choie}, we see that $cwe_{\co'}(\theta_{2^{m-1},\mu}(\tau,z):\mu\in \z)$ is a Jacobi form of weight $\frac{\ell}{2}$ and index $\ell2^{m-1}$ on $\Gamma(1),$ where $\ell$ is the length of the code $\co'.$
\end{proof}
In the following theorem, we construct Type I and Type II  codes (of higher lengths) over $\z$ from Type I codes of odd length $n$ over $\z.$ Here by Lemma \ref{a}, $m$ must be an even integer.
\begin{theo}\label{theosnodd}
Let $\co$ be a Type I code of odd length $n$ over $\z$ $(m$ is an even integer$)$ and $\co_0=\{c \in \co: wt_E(c) \equiv 0~(\text{mod }2^{m+1})\}.$ Here we have $\eta(i,j)=[i+2j]_4$ for $0\leq i,j\leq 2^m-1,$ and the following hold:
\begin{description}
  \vspace{-2mm}\item[I.] Let $n\equiv 3~(\text{mod }4).$
  \begin{itemize}
  \vspace{-2mm}\item[\textup{(a)}] Let $v_1,v_2 \in \z^5$ and $w_1,w_2,w_3 \in \z^{n+5}$ be chosen as \vspace{-2mm}\begin{eqnarray*}
 \begin{array}{ll}
v_1=(2^{\f-1},2^{\f-1},2^{\f-1},2^{\f-1},2^{\f-1}), &v_2=(2^{\f},0,0,0,0), \\
 w_1=(2^{\f},2^{\f},0,0,0,0,\cdots,0),&
w_2=(0,2^{\f},2^{\f},0,0,0,\cdots,0),\\
w_3=(0,0,2^{\f},2^{\f},0,0,\cdots,0).\end{array}
\end{eqnarray*}
 Then the code $\co'=\left< \co^* \cup \{w_1,w_2,w_3\}\right>$ is a self-dual code of length $n+5.$ Furthermore, the code $\co'$ is a Type II code when $n\equiv 3~(\text{mod }8)$ and $\co'$ is a Type I code when $n\equiv 7~(\text{mod }8).$ The complete weight enumerator $cwe_{\co'}(X_{\mu}:\mu\in\z)$ of $\co'$ is given by
\vspace{-2mm}\begin{eqnarray*}cwe_{\co'}(X_{\mu}:\mu\in\z)=\sum X_{(i+2j+2k_1)2^{\f-1}}X_{(i+2k_1+2k_2)2^{\f-1}}
     X_{(i+2k_2+2k_3)2^{\f-1}}X_{(i+2k_3)2^{\f-1}}\\ X_{i2^{\f-1}}cwe_{\co_{\eta(i,j)}}(X_{\mu}:\mu\in\z),
     \vspace{-2mm} \end{eqnarray*}
       where the summation $\sum$ runs over all integral 5-tuples $(i,j,k_1,k_2,k_3)$ satisfying $1\leq j,k_1,k_2,k_3\leq {2^{\f}}$ and $1\leq i\leq {2^{\f+1}}.$
 \item[\textup{(b)}] Let $v_1,v_2 \in \z^9$  and $w_p\in \z^{n+9}$ $(1\leq p\leq 7)$ be chosen as
  \begin{eqnarray*}
  \begin{array}{ll}
  v_1=(2^{\f-1},2^{\f-1},\cdots,2^{\f-1}),& v_2=(2^{\f},0,\cdots,0),\\
   w_1=(0,2^{\f},2^{\f},0\cdots,0),&  w_2=(0,0,2^{\f},2^{\f},0\cdots,0),\\
    w_3=(0,0,0,2^{\f},2^{\f},0\cdots,0), &  w_4=(0,0,0,0,2^{\f},2^{\f},0\cdots,0),\\
    w_5=(0,0,0,0,0,2^{\f},2^{\f},0\cdots,0), & w_6=(0,0,0,0,0,0,2^{\f},2^{\f},0\cdots,0),\\
    w_7=(0,0,0,0,0,0,0,2^{\f},2^{\f},0\cdots,0).\end{array}\end{eqnarray*}
Then $\co'=\langle \co^*\cup\{w_1,w_2,w_3,w_4,w_5,w_6,w_7\}\rangle$ is a self-dual code of length $n+9.$ Furthermore, $\co'$ is a Type II code when $n\equiv 7~(\text{mod }8)$  and $\co'$ is a Type I code when $n\equiv 3~(\text{mod }8).$ The complete weight enumerator $cwe_{\co'}(X_{\mu}:\mu\in\z)$ of $\co'$ is given by
      \vspace{-2mm}\begin{eqnarray*}cwe_{\co'}(X_{\mu}:\mu\in\z)=\sum X_{(i+2j)2^{\f-1}}X_{(i+2k_1)2^{\f-1}}
     X_{(i+2k_1+2k_2)2^{\f-1}}X_{(i+2k_2+2k_3)2^{\f-1}}\\
     X_{(i+2k_3+2k_4)2^{\f-1}}X_{(i+2k_7)2^{\f-1}}
     X_{(i+2k_4+2k_5)2^{\f-1}}X_{(i+2k_5+2k_6)2^{\f-1}}\\
     X_{(i+2k_6+2k_7)2^{\f-1}} cwe_{\co_{\eta(i,j)}}(X_{\mu}:\mu\in\z),
     \vspace{-2mm} \end{eqnarray*}
      where the summation $\sum$ runs over all integral 9-tuples $(i,j,k_1,k_2,k_3,k_4,k_5,k_6,k_7)$ satisfying $1\leq j,k_1,k_2,k_3,k_4,k_5,k_6,k_7\leq {2^{\f}}$ and $1\leq i\leq {2^{\f+1}}.$
\end{itemize}
\vspace{-2mm}\item[II.] Let $n\equiv 1~(\text{mod }4).$
  \begin{itemize}
\vspace{-2mm}\item[\textup{(a)}] Let $v_1,v_2 \in \z^3$ and $w_1\in \z^{n+3}$ be chosen as \begin{eqnarray*}
    \begin{array}{lll}
    v_1=(2^{\f-1},2^{\f-1},2^{\f-1}),&
v_2=(2^{\f},0,0),&w_1=(2^{\f},2^{\f},0,0,\cdots,0).\end{array}
 \end{eqnarray*} Then $\co'=\left< \co^* \cup w_1\right>$ is a self-dual code of length $n+3$ over $\z.$ Furthermore, $\co'$ is a Type II code when $n\equiv 5~(\text{mod }8)$ and $\co'$ is a Type I code when $n\equiv 1~(\text{mod }8).$ The complete weight enumerator $cwe_{\co'}(X_{\mu}:\mu\in\z)$ of $\co'$ is given by \small
\vspace{-2mm}\begin{eqnarray*} cwe_{\co'}(X_{\mu}:\mu\in\z)=\sum X_{(i+2j+2k_1)2^{\f-1}}X_{(i+2k_1)2^{\f-1}}
      X_{i2^{\f-1}}cwe_{\co_{\eta(i,j)}}(X_{\mu}:\mu\in\z),\end{eqnarray*}\normalsize
  where the summation $\sum$ runs over all integral 3-tuples $(i,j,k_1)$ satisfying $1\leq j,k_1\leq 2^{\f}$ and $1\leq i\leq 2^{\f+1}.$
\item[\textup{(b)}] Let $v_1,v_2 \in \z^7$ and $w_1,w_2,w_3,w_4,w_5 \in \z^{n+7}$ be chosen as
  \begin{eqnarray*}\begin{array}{ll}
 v_1=(2^{\f-1},2^{\f-1},2^{\f-1},2^{\f-1},
  2^{\f-1},2^{\f-1},2^{\f-1}),& v_2=(2^{\f},0,0,0,0,0,0),\\w_1=(2^{\f},2^{\f},0,\cdots,0),&
 w_2=(0,2^{\f},2^{\f},0,\cdots,0),\\
 w_3=(0,0,2^{\f},2^{\f},0,\cdots,0),&
 w_4=(0,0,0,2^{\f}, 2^{\f},0,\cdots,0),\\
 w_5=(0,0,0,0,2^{\f},2^{\f},0,\cdots,0).\end{array}
 \end{eqnarray*}
 Then $\co'=\left<\co^* \cup \{w_1,w_2,w_3,w_4,w_5\} \right>$ is a self-dual code of length $n+7$ over $\z.$ Furthermore, $\co'$ is a Type II code of length $n+7$ over $\z$ when $n\equiv 1~(\text{mod }8)$ and $\co'$ is a Type I code when $n\equiv 5~(\text{mod }8).$ The complete weight enumerator $cwe_{\co'}(X_{\mu}:\mu\in\z)$ of $\co'$ is given by
  \vspace{-2mm} \small{  \begin{eqnarray*}
      cwe_{\co'}(X_{\mu}:\mu\in\z)=\sum X_{(i+2j+2k_1)2^{\f-1}}X_{(i+2k_1+2k_2)2^{\f-1}}
      X_{(i+2k_2+2k_3)2^{\f-1}}X_{(i+2k_3+2k_4)2^{\f-1}}
      \\X_{(i+2k_4+2k_5)2^{\f-1}}X_{(i+2k_5)2^{\f-1}} X_{i2^{\f-1}} cwe_{\co_{\eta(i,j)}}(X_{\mu}:\mu\in\z),\end{eqnarray*}}\normalsize
  where the summation $\sum$ runs over all integral 7-tuples $(i,j,k_1,k_2,k_3,k_4,k_5)$ satisfying $1\leq j,k_1,k_2,k_3,k_4,k_5\leq {2^{\f}}$ and $1\leq i\leq {2^{\f+1}}.$
\end{itemize}
\end{description}
\end{theo}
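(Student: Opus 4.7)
The plan is to follow the template of the proof of Theorem~\ref{theosneven}, now invoking Lemma~\ref{lemsglue}(b) and Theorem~\ref{theosorel}(c) in place of their even-length analogues. Since $n$ is odd, $\co_0^{\perp}/\co_0$ is cyclic of order~$4$ by Lemma~\ref{lemsglue}(b), hence $\eta(i,j)=[i+2j]_4$ by \eqref{n}. For $n\equiv a~(\text{mod }4)$ with $a\in\{1,3\}$, Theorem~\ref{theosorel}(c) gives $s\cdot s\equiv 2^{m-1}-2^{m-2}a$, $s\cdot t\equiv 2^{m-1}$ and $t\cdot t\equiv 0~(\text{mod }2^m)$, so hypothesis~$(\mathbf{P_2})$ of Proposition~\ref{p} reads
\begin{equation*}
v_1\cdot v_1\equiv 2^{m-2}a-2^{m-1},\quad v_1\cdot v_2\equiv 2^{m-1},\quad v_2\cdot v_2\equiv 0 \quad(\text{mod }2^m).
\end{equation*}

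In each of the four subcases I would verify by direct computation that the prescribed $v_1,v_2$ satisfy $(\mathbf{P_1})$--$(\mathbf{P_3})$: $o(v_1)=2^{\f+1}$ is divisible by~$4$ and $o(v_2)=2^{\f}$ is divisible by~$2$; independence $(\mathbf{P_1})$ is visible from the distinguished $2^{\f}$ entry of $v_2$; and the three inner products reduce (since $m$ is even) to the identities $5\cdot 2^{m-2}\equiv 2^{m-2}$, $9\cdot 2^{m-2}\equiv 2^{m-2}$, $3\cdot 2^{m-2}$, $7\cdot 2^{m-2}\equiv 3\cdot 2^{m-2}~(\text{mod }2^m)$ in cases I(a), I(b), II(a), II(b) respectively, matching the required residues. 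Proposition~\ref{p} then produces a self-orthogonal $\co^*$ of size $2^{m+1}\cdot 2^{mn/2-1}=2^{m(n+2)/2}$, sitting inside $\z^{n+k_0}$ with $k_0\in\{3,5,7,9\}$.

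Since $k_0>2$, I would enlarge $\co^*$ to $\co'=\left<\co^*\cup\{w_p\}\right>$. Every nonzero entry of each $w_p$ equals $2^{\f}$, so $2^{\f}\cdot 2^{\f}=2^m$ and $2^{\f}\cdot 2^{\f-1}=2^{m-1}$, and a coordinate-by-coordinate check yields $w_p\cdot w_q\equiv 0$ and $w_p\cdot c^*\equiv 0~(\text{mod }2^m)$ for all $p,q$ and $c^*\in\co^*$; hence $\co'$ is self-orthogonal. Each $w_p$ has order $2^{\f}$, and the supports are arranged so that $|\co'|=2^{\f(k_0-2)}\cdot|\co^*|=2^{m(n+k_0)/2}$, establishing self-duality. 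To distinguish Type~I from Type~II, I would use $wt_E(x)\equiv x\cdot x~(\text{mod }2^{m+1})$ (valid since $a^2\equiv(2^m-a)^2~(\text{mod }2^{m+1})$). Each $w_p$ contributes $wt_E(w_p)=2^{m+1}\equiv 0$, and the cross terms $2w_p\cdot c^*$, $2w_p\cdot w_q$ vanish modulo $2^{m+1}$, so $\co'$ has the same Type as $\co^*$. Expanding on the generator families of $\co^*$ and using Lemma~\ref{2.2} together with $wt_E(v_1)=k_0\cdot 2^{m-2}$ and $wt_E(v_2)=2^m$, a uniform calculation gives $wt_E\equiv 2^{m-2}(n+k_0)~(\text{mod }2^{m+1})$, which vanishes precisely when $n+k_0\equiv 0~(\text{mod }8)$; this is exactly the Type~II condition in each of the four subcases.

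Finally, the complete weight enumerator is obtained by enumerating $c'=(iv_1+jv_2+\sum_p k_pw_p,\,c_{ij})$ with $1\le i\le o(v_1)$, $1\le j\le o(v_2)$, $1\le k_p\le o(w_p)$, $c_{ij}\in\co_{\eta(i,j)}$, reading off each of the $k_0$ extension coordinates as the claimed multiple of $2^{\f-1}$, and multiplying by $cwe_{\co_{\eta(i,j)}}$. The main bookkeeping obstacle will be Case~I(b), whose nine-coordinate extension and seven $w_p$'s generate the telescoping patterns in the final formula; once that case is laid out carefully, the other three subcases are strictly smaller variants of the same computation.
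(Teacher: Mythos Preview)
Your proposal is correct and follows essentially the same route as the paper, which itself merely invokes Lemma~\ref{lemsglue}(b) to get the cyclic glue group and then says ``working in a similar way as in Theorem~\ref{theosneven}, the result follows.'' Your outline is in fact more detailed than the paper's proof: you explicitly compute the $(\mathbf{P_2})$ residues from Theorem~\ref{theosorel}(c), verify the orders and independence for $(\mathbf{P_1})$ and $(\mathbf{P_3})$, check orthogonality and cardinality of $\co'$, and use $wt_E(x)\equiv x\cdot x~(\text{mod }2^{m+1})$ together with Lemma~\ref{2.2} to settle the Type~I/II dichotomy---all of which are exactly the computations the paper performs in the proof of Theorem~\ref{theosneven} and leaves implicit here.
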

\begin{proof} As $n$ is odd, by Lemma \ref{lemsglue}(b), we see that the glue group $\co_0^{\perp}/\co_0$ is a cyclic group of order 4. Thus by \eqref{n}, we have $\eta(i,j)=\left[i+2j\right]_{4}$ for each $i$ and $j.$  Then working in a similar way as in Theorem \ref{theosneven}, the result follows.
\end{proof}
In the following theorem, we obtain Jacobi forms from complete weight enumerators of Type II codes over $\z,$ which are constructed in the above theorem.
\begin{theo}\label{theosnoj}
Let $\co$ be a Type I code of odd length $n$ over $\z$ and $\co_0=\{c \in \co: wt_E(c) \equiv 0~(\text{mod }2^{m+1})\}.$  Then the following holds:
\begin{itemize}
  \vspace{-2mm}\item[\textup{(a)}] When $n\equiv 3~(\text{mod }8),$ let $\co'$ be the Type II code of length $n+5$ over $\z$ as defined in Theorem \ref{theosnodd} I\textup{(a)}. Let $cwe_{\co'}(X_{\mu}:\mu\in\z)$ be as obtained in Theorem \ref{theosnodd} I\textup{(a)}. Then
      $cwe_{\co'}(\theta_{2^{m-1},\mu}(\tau,z):\mu\in \z)$ is a Jacobi form of weight $\frac{n+5}{2}$ and index $(n+5)2^{m-1}$ on $\Gamma(1).$
 \vspace{-2mm}\item[\textup{(b)}] When $n\equiv 7~(\text{mod }8),$ let $\co'$ be the Type II code of length $n+9$ over $\z$ as defined in Theorem \ref{theosnodd} I\textup{(b)}. Let $cwe_{\co'}(X_{\mu}:\mu\in\z)$ be as obtained in Theorem \ref{theosnodd} I\textup{(b)}. Then
      $cwe_{\co'}(\theta_{2^{m-1},\mu}(\tau,z):\mu\in \z)$ is a Jacobi form of weight $\frac{n+9}{2}$ and index $(n+9)2^{m-1}$ on $\Gamma(1).$
\vspace{-2mm}\item[\textup{(c)}] When $n\equiv 5~(\text{mod }8),$ let $\co'$ be the Type II code of length $n+3$ over $\z$ as defined in Theorem \ref{theosnodd} II\textup{(a)}. Let $cwe_{\co'}(X_{\mu}:\mu\in\z)$ be as obtained in Theorem \ref{theosnodd} II\textup{(a)}. Then
      $cwe_{\co'}(\theta_{2^{m-1},\mu}(\tau,z):\mu\in \z)$ is a Jacobi form of weight $\frac{n+3}{2}$ and index $(n+3)2^{m-1}$ on $\Gamma(1).$
   \vspace{-2mm}\item[\textup{(d)}] When $n\equiv 1~(\text{mod }8),$ let $\co'$ be the Type II code of length $n+7$ over $\z$ as defined in Theorem \ref{theosnodd} II\textup{(b)}. Let $cwe_{\co'}(X_{\mu}:\mu\in\z)$ be as obtained in Theorem \ref{theosnodd} II\textup{(b)}. Then
      $cwe_{\co'}(\theta_{2^{m-1},\mu}(\tau,z):\mu\in \z)$ is a Jacobi form of weight $\frac{n+7}{2}$ and index $(n+7)2^{m-1}$ on $\Gamma(1).$
\end{itemize}
\end{theo}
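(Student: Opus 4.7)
The plan is to mirror the proof of Theorem \ref{theosnej}: the statement is an immediate application of Proposition 4.2 and Theorem 4.4 of Choie and Kim \cite{choie}, which together assert that for any Type II code $\mathcal{D}$ of length $\ell$ over $\z$, the substitution $X_\mu \mapsto \theta_{2^{m-1},\mu}(\tau,z)$ in the complete weight enumerator $cwe_{\mathcal{D}}(X_\mu:\mu\in\z)$ yields a Jacobi form of weight $\ell/2$ and index $\ell\cdot 2^{m-1}$ on $\Gamma(1)$. Consequently, the only content specific to the odd-length setting is the verification that the codes $\co'$ appearing in cases (a)--(d) are indeed Type II of the stated lengths, and this has already been carried out in Theorem \ref{theosnodd}.

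Accordingly, I would proceed case by case. In case (a), $n \equiv 3~(\text{mod }8)$, so Theorem \ref{theosnodd} I(a) guarantees that $\co'$ is Type II of length $\ell = n+5$; invoking Choie--Kim with this $\ell$ yields a Jacobi form of weight $(n+5)/2$ and index $(n+5)2^{m-1}$. The remaining cases are identical in structure: case (b) uses Theorem \ref{theosnodd} I(b) with $\ell = n+9$, case (c) uses Theorem \ref{theosnodd} II(a) with $\ell = n+3$, and case (d) uses Theorem \ref{theosnodd} II(b) with $\ell = n+7$. In every instance $n$ is odd and the extension length is odd, so $\ell$ is even and $\ell/2$ is an integer, ensuring the weight is well-defined; moreover, Lemma \ref{a} is automatically satisfied since $\ell$ is even.

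There is no substantive obstacle: the deep analytic content --- modular transformation and theta expansion behaviour --- is entirely packaged inside the Choie--Kim result, and the Type II property of each $\co'$ was precisely what was established in the Euclidean-weight congruence computations at the end of Theorem \ref{theosnodd}. The proof therefore reduces to citing these two ingredients and reading off the length in each of the four cases, exactly as was done for the even-length analogue in Theorem \ref{theosnej}.
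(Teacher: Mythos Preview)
Your proposal is correct and matches the paper's approach exactly: the paper's proof simply states that it is similar to that of Theorem \ref{theosnej}, which in turn invokes Proposition 4.2 and Theorem 4.4 of Choie and Kim \cite{choie} to conclude that for a Type II code of length $\ell$ the substitution $X_\mu \mapsto \theta_{2^{m-1},\mu}(\tau,z)$ yields a Jacobi form of weight $\ell/2$ and index $\ell\,2^{m-1}$ on $\Gamma(1)$. Your case-by-case reading of $\ell$ from Theorem \ref{theosnodd} is precisely the intended argument.
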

\begin{proof}
Its proof is similar to that of Theorem \ref{theosnej}.\end{proof}
\section{Construction of self-dual codes from generalized shadows of self-dual codes}\label{secgenshad}
In this section, we will construct self-dual codes  (of higher lengths) over $\z$ from the generalized shadow $S_g(s)$ of a self-dual code $\co$ of length $n$ over $\z$  for all $n,$ where $s \in \z^n\setminus \co.$  Here we will consider the following two cases separately: (i) $s\cdot s \equiv 0~(\text{mod }2^m)$ and (ii) $s \cdot s \equiv 2^{m-1}~(\text{mod }2^m).$
\\In the following theorem, we consider the case $s\cdot s\equiv 0~(\text{mod }2^m).$
\begin{theo}\label{theogs0}
Let $\co$ be a self-dual code of  length $n$ over $\z.$ Let $\mathcal{S}_g(s)$ be the generalized shadow of $\co$ with respect to a vector $s \in \z^n \setminus \co$  satisfying $s\cdot s\equiv 0~(\text{mod }2^m).$ Let $\eta(i,j)$ $(0 \leq i,j \leq 2^m-1)$ be as defined in \eqref{n}. Then we have the following:
\begin{description}
 \vspace{-2mm} \item[I.] Let $n\equiv 0~(\text{mod }4).$
   \begin{itemize}
  \vspace{-2mm}\item[\textup{(a)}] Let $v_1,v_2 \in \z^4$ and $w_1,w_2 \in \z^{n+4}$ be as chosen in Theorem \ref{theosneven} II(a).
Then $\co'=\left< \co^* \cup \{w_1,w_2\}\right>$ is a self-dual code of length $n+4$ over $\z.$ Moreover, the complete weight enumerator $cwe_{\co'}(X_\mu:\mu\in\z)$ of $\co'$ is given by
$$cwe_{\co'}(X_\mu:\mu\in\z)=\left\{\begin{array}{ll}\sum_1 X_{(i+2j+2k_1)2^{\f-1}}X_{(i+2k_1)2^{\f-1}}\\
      X_{(i+2k_2)2^{\f-1}}^2cwe_{\co_{\eta(i,j)}}(X_\mu:\mu\in\z)& \text{if }m \text{ is even;}\vspace{2mm}\\\sum_2
      X_{(i+j+k_1+2k_2)2^{\fo}}X_{(j+k_1)2^{\fo}}
      X_{(i+k_1)2^{\fo}}\\X_{k_12^{\fo}}cwe_{\co_{\eta(i,j)}}(X_\mu:\mu\in\z) & \text{if }m \text{ is odd,}\end{array}\right.$$
 where the summation $\sum_1$ runs over all integral 4-tuples $(i,j,k_1,k_2)$ satisfying $1\leq j,k_1,k_2\leq 2^{\f}$ and $1\leq i\leq 2^{\f+1},$ whereas the summation $\sum_2$ runs over all integral 4-tuples $(i,j,k_1,k_2)$ satisfying $1\leq i,j,k_1\leq {2^{\fe}}$ and $1\leq k_2\leq {2^{\fo}}.$
   \item[\textup{(b)}] Let $v_1,v_2 \in \z^8$ and $w_1,w_2,w_3,w_4,w_5,w_6 \in \z^{n+8}$ be as chosen in Theorem \ref{theosneven} II(b).
 Then $\co'=\left<\co^* \cup \{w_1,w_2,w_3,w_4,w_5,w_6\} \right>$ is a self-dual code of length $n+8$ over $\z.$ Moreover, the complete weight enumerator $cwe_{\co'}(X_\mu:\mu\in\z)$ of $\co'$ is given by \small
      \begin{eqnarray*}
      cwe_{\co'}(X_\mu:\mu\in\z)=\left\{\begin{array}{ll}\sum_1 X_{(i+2j+2k_1)2^{\f-1}}X_{(i+2k_2)2^{\f-1}}
      X_{(i+2(k_1+k_3))2^{\f-1}}\\X_{(i+2(k_2+k_4))2^{\f-1}}X_{(i+2(k_3+k_5))2^{\f-1}}X_{(i+2(k_4+k_6))2^{\f-1}}\\ X_{(i+2k_5)2^{\f-1}}X_{(i+2k_6)2^{\f-1}} cwe_{\co_{\eta(i,j)}}(X_\mu:\mu\in\z) & \text{if } m \text{ is even;}\vspace{2mm}\\\sum_2 X_{(i+j+k_1+2k_4)2^{\fo}}X_{(j+k_1+2k_5)2^{\fo}}
      X_{(i+k_1+k_2+2k_6)2^{\fo}}
      \\X_{(k_1+k_2)2^{\fo}}X_{(i+k_2+k_3)2^{\fo}}
      X_{(k_2+k_3)2^{\fo}}X_{(i+k_3)2^{\fo}}\\
      X_{k_32^{\fo}}cwe_{\co_{\eta(i,j)}}(X_\mu:\mu\in\z) & \text{if }m \text{ is odd,}\end{array}\right.\end{eqnarray*}\normalsize
 where the summation $\sum_1$ runs over all integral 8-tuples $(i,j,k_1,k_2,k_3,k_4,k_5,k_6)$ satisfying $1\leq j,k_1,k_2,k_3,k_4,k_5,k_6\leq {2^{\f}}$ and $1\leq i\leq {2^{\f+1}},$ whereas the summation $\sum_2$ runs over all integral 8-tuples $(i,j,k_1,k_2,k_3,k_4,k_5,k_6)$ satisfying $1\leq i,j,k_1,k_2,k_{3}\leq {2^{\fe}}$ and $1\leq k_4,k_{5},k_6\leq {2^{\fo}}.$
 \end{itemize}
\vspace{-2mm}\item[II.] Let $n\equiv 2~(\text{mod }4).$
\begin{itemize}
\vspace{-2mm}\item[\textup{(a)}] Let $v_1,v_2 \in \z^6$ and $w_1,w_2,w_3,w_4 \in \z^{n+6}$ be chosen as \begin{eqnarray*}
 v_1&=&\left\{\begin{array}{ll}(2^{\f-1},2^{\f-1},2^{\f-1},2^{\f-1},0,0) & \text{if } m \text{ is even;} \\(2^{\fo},0,2^{\fo},0,0,0) & \text{if }m \text{ is odd,} \end{array}\right.\\
 v_2&=&\left\{\begin{array}{ll}(2^{\f},0,0,0,0,0) & \text{if } m \text{ is even;} \\(2^{\fo},2^{\fo},0,0,0,0) & \text{if }m \text{ is odd,} \end{array}\right.\\
 w_1&=&\left\{\begin{array}{ll}(2^{\f},2^{\f},0,0,\cdots,0) & \text{if } m \text{ is even;} \\(2^{\fo},2^{\fo},2^{\fo},2^{\fo},0,\cdots,0) & \text{if }m \text{ is odd,} \end{array}\right.\\
 w_2&=&\left\{\begin{array}{ll}(0,2^{\f},2^{\f},0,0,\cdots,0) & \text{if } m \text{ is even;} \\(0,0,0,0,2^{\fo},2^{\fo},0,\cdots,0) & \text{if }m \text{ is odd,} \end{array}\right.\\
 w_3&=&\left\{\begin{array}{ll}(0,0,2^{\f},2^{\f},0,\cdots,0) & \text{if } m \text{ is even;} \\(2^{\fe},0,\cdots,0) & \text{if }m \text{ is odd,} \end{array}\right.\\
 w_4&=&\left\{\begin{array}{ll}(0,0,0,0,2^{\f},0,\cdots,0) & \text{if } m \text{ is even;} \\(0,2^{\fe},0,\cdots,0) & \text{if }m \text{ is odd.} \end{array}\right.
 \end{eqnarray*} Then $\co'=\left< \co^* \cup \{w_1,w_2,w_3,w_4\}\right>$ is a self-dual code of length $n+6$ over $\z.$ Moreover, the complete weight enumerator $cwe_{\co'}(X_{\mu}:\mu\in\z)$ of $\co'$ is given by
\vspace{-2mm}$$cwe_{\co'}(X_{\mu}:\mu\in\z)=\left\{\begin{array}{ll}\sum_1 X_0X_{(i+2j+2k_1)2^{\f-1}}X_{(i+2k_1+2k_2)2^{\f-1}}
    \\  X_{(i+2k_2+2k_3)2^{\f-1}}
    X_{(i+2k_3)2^{\f-1}}\\X_{k_42^{\f}}cwe_{\co_{\eta(i,j)}}(X_{\mu}:\mu\in\z)& \text{if }m \text{ is even;}\vspace{2mm}\\\sum_2
      X_{(i+j+k_1+2k_3)2^{\fo}}
      X_{(j+k_1+2k_4)2^{\fo}}\\
      X_{(i+k_1)2^{\fo}}
      X_{k_12^{\fo}}\\X_{k_22^{\fo}}^2cwe_{\co_{\eta(i,j)}}(X_{\mu}:\mu\in\z) & \text{if }m \text{ is odd,}\end{array}\right.\vspace{-2mm}$$
 where the summation $\sum_1$ runs over all integral 6-tuples $(i,j,k_1,k_2,k_3,k_{4})$ satisfying $1\leq j,k_p\leq 2^{\f}$ for $1\leq p\leq 4,$ and $1\leq i\leq 2^{\f+1},$ whereas the summation $\sum_2$ runs over all integral 6-tuples $(i,j,k_1,k_2,k_3,k_{4})$ satisfying $1\leq i,j,k_1,k_2\leq {2^{\fe}}$ and $1\leq k_3,k_4\leq {2^{\fo}}.$
 \item[\textup{(b)}] Let $v_1,v_2 \in \z^{10}$ and $w_p \in \z^{n+10}$ $(1\leq p\leq 8)$ be chosen as \vspace{-2mm}\begin{eqnarray*}
 v_1&=&\left\{\begin{array}{ll}(2^{\f-1},2^{\f-1},2^{\f-1},2^{\f-1},0,0,0,0,0,0) & \text{if } m \text{ is even;} \\(2^{\fo},0,2^{\fo},0,0,0,0,0,0,0) & \text{if }m \text{ is odd,} \end{array}\right.\\
 v_2&=&\left\{\begin{array}{ll}(2^{\f},0,0,0,0,0,0,0,0,0) & \text{if } m \text{ is even;} \\(2^{\fo},2^{\fo},0,0,0,0,0,0,0,0) & \text{if }m \text{ is odd,} \end{array}\right.\\
 w_1&=&\left\{\begin{array}{ll}(2^{\f},2^{\f},0,0,\cdots,0) & \text{if } m \text{ is even;} \\(2^{\fo},2^{\fo},2^{\fo},2^{\fo},0,\cdots,0) & \text{if }m \text{ is odd,} \end{array}\right.\\
 w_2&=&\left\{\begin{array}{ll}(0,2^{\f},2^{\f},0,0,\cdots,0) & \text{if } m \text{ is even;} \\(0,0,0,0,2^{\fo},2^{\fo},0,\cdots,0) & \text{if }m \text{ is odd,} \end{array}\right.\\
 w_3&=&\left\{\begin{array}{ll}(0,0,2^{\f},2^{\f},0,\cdots,0) & \text{if } m \text{ is even;} \\(0,0,0,0,0,0,2^{\fo},2^{\fo},0,\cdots,0) & \text{if }m \text{ is odd,} \end{array}\right.\\
 w_4&=&\left\{\begin{array}{ll}(0,0,0,0,2^{\f},0,\cdots,0) & \text{if } m \text{ is even;} \\(0,0,0,0,0,0,0,0,2^{\fo},2^{\fo},0,\cdots,0) & \text{if }m \text{ is odd,} \end{array}\right.\\
 w_5&=&\left\{\begin{array}{ll}(0,0,0,0,0,2^{\f},0,\cdots,0) & \text{if } m \text{ is even;} \\(2^{\fe},0,\cdots,0) & \text{if }m \text{ is odd,} \end{array}\right.\\
 w_6&=&\left\{\begin{array}{ll}(0,0,0,0,0,0,2^{\f},0,\cdots,0) & \text{if } m \text{ is even;} \\(0,2^{\fe},0,\cdots,0) & \text{if }m \text{ is odd,} \end{array}\right.\\
 w_7&=&\left\{\begin{array}{ll}(0,0,0,0,0,0,0,2^{\f},0,\cdots,0) & \text{if } m \text{ is even;} \\(0,0,2^{\fe},0,\cdots,0) & \text{if }m \text{ is odd,} \end{array}\right.\\
 w_8&=&\left\{\begin{array}{ll}(0,0,0,0,0,0,0,0,2^{\f},0,\cdots,0) & \text{if } m \text{ is even;} \\(0,0,0,2^{\fe},0,\cdots,0) & \text{if }m \text{ is odd.} \end{array}\right.
 \end{eqnarray*} Then $\co'=\left< \co^* \cup \{w_p:1\leq p\leq 8\}\right>$ is a self-dual code of length $n+10$ over $\z.$ Moreover, the complete weight enumerator $cwe_{\co'}(X_{\mu}:\mu\in\z)$ of $\co'$ is given by \small
\vspace{-2mm}$$cwe_{\co'}(X_{\mu}:\mu\in\z)=\left\{\begin{array}{ll}\sum_1 X_0 X_{(i+2j+2k_1)2^{\f-1}}
X_{(i+2k_1+2k_2)2^{\f-1}}\\
      X_{(i+2k_2+2k_3)2^{\f-1}}
      X_{(i+2k_3)2^{\f-1}}X_{k_42^{\f}}
      X_{k_52^{\f}}\\X_{k_62^{\f}}X_{k_72^{\f}} X_{k_82^{\f}} cwe_{\co_{\eta(i,j)}}(X_{\mu}:\mu\in\z)& \text{if }m \text{ is even;}\vspace{2mm}\allowdisplaybreaks\\\sum_2
      X_{(i+j+k_1+2k_5)2^{\fo}}
      X_{(j+k_1+2k_6)2^{\fo}}\\
      X_{(i+k_1+2k_7)2^{\fo}}
      X_{(k_1+2k_8)2^{\fo}}X_{k_22^{\fo}}^2\\ X_{k_32^{\fo}}^2X_{k_42^{\fo}}^2cwe_{\co_{\eta(i,j)}}(X_{\mu}:\mu \in\z) & \text{if }m \text{ is odd,}\end{array}\right.\vspace{-2mm}$$\normalsize
 where the summation $\sum_1$ runs over all integral 10-tuples $(i,j,k_1,k_2,\cdots,k_{8})$ satisfying $1\leq j,k_p\leq 2^{\f}$ for $1\leq p\leq 8,$ and $1\leq i\leq 2^{\f+1},$ whereas the summation $\sum_2$ runs over all integral 10-tuples $(i,j,k_1,k_2,\cdots,k_{8})$ satisfying $1\leq k_1,k_2,k_3,k_4\leq {2^{\fe}}$ and $1\leq k_5,k_6,k_7,k_8\leq {2^{\fo}}.$
 \end{itemize}
 \vspace{-2mm}\item[III.] Let $n\equiv 3~(\text{mod }4).$ Here the integer $m$ must be even.
\begin{itemize}
\vspace{-2mm}\item[\textup{(a)}] Let $v_1,v_2 \in \z^5$ and $w_1,w_2,w_3 \in \z^{n+5}$ be chosen as \vspace{-2mm}\begin{eqnarray*}\begin{array}{ll}
 v_1=(2^{\f-1},2^{\f-1},2^{\f-1},2^{\f-1},0),&
 v_2=(2^{\f},0,0,0,0),\\
 w_1=(2^{\f},2^{\f},0,\cdots,0),&
 w_2=(0,2^{\f},2^{\f},0,\cdots,0),\\
 w_3=(0,0,2^{\f},2^{\f},0,\cdots,0).\end{array}
 \end{eqnarray*} Then $\co'=\left< \co^* \cup \{w_1,w_2,w_3\}\right>$ is a self-dual code of length $n+5$ over $\z.$ Moreover, the complete weight enumerator $cwe_{\co'}(X_{\mu}:\mu\in\z)$ of $\co'$ is given by
\vspace{-2mm}\begin{eqnarray*}cwe_{\co'}(X_{\mu}:\mu\in\z)=\sum X_0 X_{(i+2j+2k_1)2^{\f-1}}X_{(i+2k_1+2k_2)2^{\f-1}}
      X_{(i+2k_2+2k_3)2^{\f-1}}\\X_{(i+2k_3)2^{\f-1}}  cwe_{\co_{\eta(i,j)}}(X_{\mu}:\mu\in\z),\vspace{-2mm}\end{eqnarray*}
 where the summation $\sum$ runs over all integral 5-tuples $(i,j,k_1,k_2,k_3)$ satisfying $1\leq j,k_p\leq 2^{\f}$ for $1\leq p\leq 3,$ and $1\leq i\leq 2^{\f+1}.$
\item[\textup{(b)}] Let $v_1,v_2 \in \z^{9}$ and $w_p \in \z^{n+9}$ $(1\leq p\leq 7)$ be chosen as \vspace{-2mm}\begin{eqnarray*}
     \begin{array}{ll}
 v_1=(2^{\f-1},2^{\f-1},2^{\f-1},2^{\f-1},0,0,0,0,0),&
 v_2=(2^{\f},0,0,0,0,0,0,0,0),\\
 w_1=(2^{\f},2^{\f},0,\cdots,0),&
 w_2=(0,2^{\f},2^{\f},0,\cdots,0),\\
 w_3=(0,0,2^{\f},2^{\f},0,\cdots,0),&
 w_4=(0,0,0,0,2^{\f},0,\cdots,0),\\
 w_5=(0,0,0,0,0,2^{\f},0,\cdots,0),&
 w_6=(0,0,0,0,0,0,2^{\f},0,\cdots,0),\\
 w_7=(0,0,0,0,0,0,0,2^{\f},0,\cdots,0).\end{array}
 \end{eqnarray*} Then $\co'=\left< \co^* \cup \{w_p:1\leq p\leq 7\}\right>$ is a self-dual code of length $n+9$ over $\z.$ Moreover, the complete weight enumerator $cwe_{\co'}(X_{\mu}:\mu\in\z)$ of $\co'$ is given by
\vspace{-2mm}\begin{eqnarray*}cwe_{\co'}(X_{\mu}:\mu\in\z)=\sum X_0 X_{(i+2j+2k_1)2^{\f-1}}X_{(i+2k_1+2k_2)2^{\f-1}}
      X_{(i+2k_2+2k_3)2^{\f-1}}\\X_{(i+2k_3)2^{\f-1}}
      X_{k_42^{\f}}X_{k_52^{\f}}X_{k_62^{\f}}X_{k_72^{\f}}
       cwe_{\co_{\eta(i,j)}}(X_{\mu}:\mu\in\z),\end{eqnarray*}
  where the summation $\sum$ runs over all integral 9-tuples $(i,j,k_1,k_2,\cdots,k_{7})$ satisfying $1\leq j,k_p\leq 2^{\f}$ for $1\leq p\leq 7,$ and $1\leq i\leq 2^{\f+1}.$
 \end{itemize}
 \vspace{-2mm}\item[IV.] Let $n\equiv 1~(\text{mod }4).$ Here $m$ must be an even integer.
\begin{itemize}
 \vspace{-2mm}\item[\textup{(a)}] Let $v_1,v_2 \in \z^7$ and $w_1,w_2,w_3,w_4,w_5 \in \z^{n+7}$ be chosen as \vspace{-2mm}\begin{eqnarray*}
    \begin{array}{ll}
 v_1=(2^{\f-1},2^{\f-1},2^{\f-1},2^{\f-1},0,0,0),&
 v_2=(2^{\f},0,0,0,0,0,0),\\
 w_1=(2^{\f},2^{\f},0,\cdots,0),&
 w_2=(0,2^{\f},2^{\f},0,\cdots,0),\\
 w_3=(0,0,2^{\f},2^{\f},0,\cdots,0),&
 w_4=(0,0,0,0,2^{\f},0,\cdots,0),\\
 w_5=(0,0,0,0,0,2^{\f},0,\cdots,0).\end{array}
 \end{eqnarray*} Then $\co'=\left< \co^* \cup \{w_1,w_2,w_3,w_4,w_5\}\right>$ is a self-dual code of length $n+7$ over $\z.$ Moreover, the complete weight enumerator $cwe_{\co'}(X_{\mu}:\mu\in\z)$ of $\co'$ is given by
\vspace{-2mm}\begin{eqnarray*}cwe_{\co'}(X_{\mu}:\mu\in\z)=\sum X_0X_{(i+2j+2k_1)2^{\f-1}}X_{(i+2k_1+2k_2)2^{\f-1}}
      X_{(i+2k_2+2k_3)2^{\f-1}}\\X_{(i+2k_3)2^{\f-1}}X_{k_42^{\f}}X_{k_52^{\f}} cwe_{\co_{\eta(i,j)}}(X_{\mu}:\mu\in\z),\end{eqnarray*}
  where the summation $\sum$ runs over all integral 7-tuples $(i,j,k_1,k_2,\cdots,k_{5})$ satisfying $1\leq j,k_p\leq 2^{\f}$ for $1\leq p\leq 5,$ and $1\leq i\leq 2^{\f+1}.$
 \item[\textup{(b)}] Let $v_1,v_2 \in \z^{11}$ and $w_p \in \z^{n+11}$ $(1\leq p\leq 9)$ be chosen as \vspace{-2mm}\begin{eqnarray*}
     \begin{array}{ll}
 v_1=(2^{\f-1},2^{\f-1},2^{\f-1},2^{\f-1},0,0,0,0,0,0,0),&
 v_2=(2^{\f},0,0,0,0,0,0,0,0,0,0),\\
 w_1=(2^{\f},2^{\f},0,\cdots,0),&
 w_2=(0,2^{\f},2^{\f},0,\cdots,0),\\
 w_3=(0,0,2^{\f},2^{\f},0,\cdots,0),&
 w_4=(0,0,0,0,2^{\f},0,\cdots,0),\\
 w_5=(0,0,0,0,0,2^{\f},0,\cdots,0),&
 w_6=(0,0,0,0,0,0,2^{\f},0,\cdots,0),\\
 w_7=(0,0,0,0,0,0,0,2^{\f},0,\cdots,0),&
 w_8=(0,0,0,0,0,0,0,0,2^{\f},0,\cdots,0),\\
 w_9=(0,0,0,0,0,0,0,0,0,2^{\f},0,\cdots,0).\end{array}
 \end{eqnarray*} Then $\co'=\left< \co^* \cup \{w_p:1\leq p\leq 9\}\right>$ is a self-dual code of length $n+11$ over $\z.$ Moreover, the complete weight enumerator $cwe_{\co'}(X_{\mu}:\mu\in\z)$ of $\co'$ is given by \small
\vspace{-2mm}\begin{eqnarray*}cwe_{\co'}(X_{\mu}:\mu\in\z)=\sum X_0X_{(i+2j+2k_1)2^{\f-1}}X_{(i+2k_1+2k_2)2^{\f-1}}
      X_{(i+2k_2+2k_3)2^{\f-1}}X_{(i+2k_3)2^{\f-1}}
      \\X_{k_42^{\f}}X_{k_52^{\f}}X_{k_62^{\f}}X_{k_72^{\f}} X_{k_82^{\f}}X_{k_92^{\f}}  cwe_{\co_{\eta(i,j)}}(X_{\mu}:\mu\in\z),\end{eqnarray*}\normalsize
  where the summation $\sum$ runs over all integral 11-tuples $(i,j,k_1,k_2,\cdots,k_{9})$ satisfying $1\leq j,k_p\leq 2^{\f}$ for $1\leq p\leq 9,$ and $1\leq i\leq 2^{\f+1}.$
 \end{itemize}
 \end{description}
\end{theo}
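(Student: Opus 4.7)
The plan is to apply Proposition \ref{p} with the prescribed $v_1,v_2$ to obtain a self-orthogonal code $\co^*$ of length $n+k$, and then enlarge $\co^*$ to the claimed self-dual code $\co'$ by adjoining the prescribed vectors $w_1,w_2,\ldots$. A preliminary observation simplifies matters: since $s\cdot s\equiv 0~(\text{mod }2^m)$ forces $2s\in\co_0$ and (via $s\cdot t\equiv 2^{m-1}~(\text{mod }2^m)$) also $2t\in\co_0$, the glue group $\co_0^{\perp}/\co_0$ is the Klein 4-group, so by \eqref{n} we have $\eta(i,j)=[i]_2+2[j]_2$. Moreover the orthogonality relations that follow from $s\cdot s\equiv 0~(\text{mod }2^m)$ coincide with those used in the $n\equiv 0~(\text{mod }4)$ branch of Theorem \ref{theosneven}, which is why Case I of the present theorem can reuse the same vectors verbatim.

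For each of the four cases (I, II, III, IV) and each subcase (a), (b), I would carry out four steps. First, by direct inspection of the coordinates of $v_1$ and $v_2$, verify $v_1\cdot v_1\equiv 0,~v_1\cdot v_2\equiv 2^{m-1},~v_2\cdot v_2\equiv 0~(\text{mod }2^m)$, which is property $(\mathbf{P_2})$; compute the additive orders $o(v_1), o(v_2)$ and check that each is divisible by $2$, which is property $(\mathbf{P_3})$ (automatic here because the glue group is Klein); and settle property $(\mathbf{P_1})$ by reading off the first two coordinates. Proposition \ref{p} then delivers self-orthogonality of $\co^*$ together with $|\co^*|=o(v_1)o(v_2)\cdot 2^{mn/2-1}$. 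Second, I would verify that $w_p\cdot w_q\equiv 0~(\text{mod }2^m)$ and $w_p\cdot c^*\equiv 0~(\text{mod }2^m)$ for every $p,q$ and every $c^*\in\co^*$; in each case this reduces to the fact that the relevant coordinate products are integer multiples of $2^m$, which gives self-orthogonality of $\co'=\langle\co^*\cup\{w_1,w_2,\ldots\}\rangle$. Third, since the $w_p$'s are supported on essentially disjoint coordinate blocks in the newly appended positions, they are independent modulo $\co^*$, so $|\co'|=o(v_1)o(v_2)\prod_p o(w_p)\cdot 2^{mn/2-1}$; a product-of-orders tally confirms that this equals $(2^m)^{(n+k)/2}$ in each case, so $\co'$ is self-dual. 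Fourth, writing every $c'\in\co'$ uniquely as $(iv_1+jv_2,c_{ij})+\sum_p k_p w_p$ with $c_{ij}\in\co_{\eta(i,j)}$ and collecting contributions coordinate by coordinate produces the stated complete weight enumerator.

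The main obstacle will be bookkeeping across the many subcases: in each the parities of $m$ must be handled separately (with different $v_1, v_2, w_p$ for $m$ even versus $m$ odd), and in Cases III and IV only even $m$ is admissible in view of Lemma \ref{a}. The most delicate step is confirming that the adjoined $w_p$'s are independent modulo $\co^*$ and that their orders multiply to the right power of $2$, because for odd $m$ the entries of the various $w_p$ come from the two distinct half-powers $2^{(m-1)/2}$ and $2^{(m+1)/2}$, yielding different additive orders whose product must line up precisely with the deficit between $|\co^*|$ and $(2^m)^{(n+k)/2}$. Beyond Proposition \ref{p} and this careful (though routine) coordinate-by-coordinate verification, no genuinely new idea is required; the content of Cases II, III, IV is the selection of vectors adapted to lengths that were not covered by Theorem \ref{theosneven}.
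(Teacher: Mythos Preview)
Your proposal is correct and follows essentially the same approach as the paper, whose proof consists of the single sentence ``Working in a similar way as in Theorem \ref{theosneven}, the result follows.'' Your four-step outline (verify $(\mathbf{P_1})$--$(\mathbf{P_3})$ to invoke Proposition \ref{p}, check orthogonality of the $w_p$'s against $\co^*$, count $|\co'|$, and read off the enumerator from the unique expression of each codeword) is exactly the template of the proof of Theorem \ref{theosneven}, and the independence/order bookkeeping for the $w_p$'s is treated with the same level of informality there. Your preliminary observation that the glue group $\co_0^{\perp}/\co_0$ is necessarily Klein in the generalized-shadow setting (because $2s\in\co_0$ and $2t\in\co_0$) is a genuine addition: the paper leaves this implicit, yet it is what justifies using $\eta(i,j)=[i]_2+2[j]_2$ uniformly across Cases I--IV (including the odd-$n$ cases III and IV, where Lemma \ref{lemsglue} does not apply).
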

\begin{proof} Working in a similar way as in Theorem \ref{theosneven}, the result follows.
\end{proof}
In the following theorem, we consider the case $s\cdot s\equiv 2^{m-1}~(\text{mod }2^m).$
\begin{theo}\label{theogsn0}
Let $\co$ be a self-dual code of length $n$ over $\z.$ Let $\mathcal{S}_g(s)$ be the generalized shadow of $\co$ with respect to a vector  $s \in \z^n \setminus \co$ satisfying $s\cdot s\equiv 2^{m-1}~(\text{mod }2^m).$ Let $\eta(i,j)$ ($0 \leq i,j \leq 2^m-1$) be as defined in \eqref{n}. Then we have the following:
\begin{description}
  \vspace{-2mm}\item[I.] Let $n\equiv 2~(\text{mod }4).$
    \begin{itemize}
 \vspace{-2mm} \item[\textup{(a)}] Let $v_1,v_2 \in \z^2$  be as chosen in Theorem \ref{theosneven} I(a).
Then $\co^*$ is a self-dual code of length $n+2.$ Moreover, the complete weight enumerator $cwe_{\co^*}(X_\mu:\mu\in\z)$ of $\co^*$ is given by
\small      $$cwe_{\co^*}(X_\mu:\mu\in\z)=\left\{\begin{array}{ll}\sum\limits_{i=1}^{2^{\f+1}}\sum\limits_{j=1}^{2^{\f}}
      X_{(i+2j)2^{\f-1}}X_{i2^{\f-1}}cwe_{\co_{\eta(i,j)}}(X_\mu:\mu\in\z)& \text{if }m\text{ is even;}\vspace{2mm}\\\sum\limits_{i=1}^{2^{\fe}}
      \sum\limits_{j=1}^{2^{\fe}}X_{(i+j)2^{\fo}}
      X_{j2^{\fo}}cwe_{\co_{\eta(i,j)}}(X_\mu:\mu\in\z)&
      \text{if }m \text{ is odd.}\end{array}\right.$$\normalsize
 \vspace{-2mm}\item[\textup{(b)}] Let $v_1,v_2 \in \z^6$ and $w_1,w_2,w_3,w_4 \in \z^{n+6}$ be as chosen in Theorem \ref{theosneven} I(b).
 Then the code $\co'=\left< \co^* \cup \{w_1,w_2,w_3,w_4\}\right>$ is a self-dual code of length $n+6.$ Moreover, the complete weight enumerator $cwe_{\co'}(X_\mu:\mu\in\z)$ of $\co'$ is given by \small
$$cwe_{\co'}(X_\mu:\mu\in\z)=\left\{\begin{array}{ll}\sum_1 X_{(i+2j+2k_1)2^{\f-1}}X_{(i+2k_2)2^{\f-1}}
      X_{(i+2(k_1+k_3))2^{\f-1}}\\X_{(i+2(k_2+k_4))2^{\f-1}} X_{(i+2k_3)2^{\f-1}}X_{(i+2k_4)2^{\f-1}}\\cwe_{\co_{\eta(i,j)}}(X_\mu:\mu\in\z) & \text{if } m \text{ is even;}\vspace{2mm}\\
      \sum_2 X_{(i+j+k_1+2k_2)2^{\fo}}X_{(j+k_1+2k_4)2^{\fo}}
      X_{(i+k_1+k_3)2^{\fo}}\\X_{(k_1+k_3)2^{\fo}} X_{(i+k_3)2^{\fo}}X_{k_32^{\fo}}cwe_{\co_{\eta(i,j)}}(X_\mu:\mu\in\z) &\text{if }m\text{ is odd,}\end{array}\right.$$ \normalsize  where the summation $\sum_1$ runs over all integral 6-tuples $(i,j,k_1,k_2,k_3,k_4)$ satisfying $1\leq j,k_1,k_2,\\k_3,k_4\leq {2^{\f}}$ and $1\leq i\leq {2^{\f+1}},$ whereas the summation $\sum_2$ runs over all integral 6-tuples $(i,j,k_1,k_2,k_3,k_4)$ satisfying $1\leq i,j,k_1,k_{3}\leq {2^{\fe}}$ and $1\leq k_2,k_{4}\leq {2^{\fo}}.$
      \end{itemize}
\vspace{-2mm}\item[II.] Let $n\equiv 0~(\text{mod }4).$
    \begin{itemize}
 \vspace{-2mm} \item[\textup{(a)}] Let $v_1,v_2 \in \z^4$  and $w_1,w_2\in \z^{n+4}$ be chosen as
  \vspace{-2mm}\begin{eqnarray*}
  v_1&=&\left\{\begin{array}{ll}(2^{\f-1},2^{\f-1},0,0) & \text{if } m \text{ is even;}\\
  (2^{\fo},0,0,0) & \text{if } m \text{ is odd,}\end{array} \right.\allowdisplaybreaks\\
  v_2&=&\left\{\begin{array}{ll}(2^{\f},0,0,0) & \text{if } m \text{ is even;}\\
  (2^{\fo},2^{\fo},0,0) & \text{if } m \text{ is odd,}\end{array} \right.\allowdisplaybreaks\\
   w_1&=& \left\{\begin{array}{ll}(2^{\f},2^{\f},0,\cdots,0) & \text{if } m \text{ is even;}\\
  (0,0,2^{\fo},2^{\fo},0,\cdots,0) & \text{if } m \text{ is odd,}\end{array} \right.\allowdisplaybreaks\\
 w_2&=&\left\{\begin{array}{ll}(0,0,2^{\f},0,\cdots,0) & \text{if } m \text{ is even;}\\
  (2^{\fe},0,\cdots,0) & \text{if } m \text{ is odd.}\end{array} \right.
  \end{eqnarray*}
Then $\co'=\langle \co^*\cup\{w_1,w_2\}\rangle$ is a self-dual code of length $n+4.$ Moreover, the complete weight enumerator $cwe_{\co'}(X_{\mu}:\mu\in\z)$ of $\co'$ is given by
\vspace{-2mm}$$cwe_{\co'}(X_{\mu}:\mu\in\z)=\left\{\begin{array}{ll}\sum_1 X_0X_{(i+2j+2k_1)2^{\f-1}}X_{(i+2k_1)2^{\f-1}}
      X_{k_22^{\f}}\\cwe_{\co_{\eta(i,j)}}(X_{\mu}:\mu\in\z)& \text{if }m \text{ is even;}\vspace{4mm}\\\sum_2
      X_{(i+j+2k_2)2^{\fo}}X_{j2^{\fo}}
      X_{k_12^{\fo}}^2\\cwe_{\co_{\eta(i,j)}}(X_{\mu}:\mu\in\z) & \text{if }m \text{ is odd,}\end{array}\right.\vspace{-2mm}$$
 where the summation $\sum_1$ runs over all integral 4-tuples $(i,j,k_1,k_2)$ satisfying $1\leq j,k_1,k_2\leq 2^{\f}$ and $1\leq i\leq 2^{\f+1},$ whereas the summation $\sum_2$ runs over all integral 4-tuples $(i,j,k_1,k_2)$ satisfying $1\leq i,j,k_1\leq {2^{\fe}}$ and $1\leq k_2\leq {2^{\fo}}.$
 \item[\textup{(b)}] Let $v_1,v_2 \in \z^8$ and $w_p \in \z^{n+8}$ $(1\leq p\leq 6)$ be chosen as \vspace{-2mm}\begin{eqnarray*}
 v_1&=&\left\{\begin{array}{ll}(2^{\f-1},2^{\f-1},0,0,0,0,0,0) & \text{if } m \text{ is even;} \\(2^{\fo},0,0,0,0,0,0) & \text{if }m \text{ is odd,} \end{array}\right.\\
 v_2&=&\left\{\begin{array}{ll}(2^{\f},0,0,0,0,0,0,0) & \text{if } m \text{ is even;} \\(2^{\fo},2^{\fo},0,0,0,0,0,0) & \text{if }m \text{ is odd,} \end{array}\right.\\
 w_1&=&\left\{\begin{array}{ll}(2^{\f},2^{\f},0,\cdots,0) & \text{if } m \text{ is even;} \\(0,0,2^{\fo},2^{\fo},0,\cdots,0) & \text{if }m \text{ is odd,} \end{array}\right.\\
 w_2&=&\left\{\begin{array}{ll}(0,0,2^{\f},0,0,0,0,\cdots,0) & \text{if } m \text{ is even;} \\(0,0,0,0,2^{\fo},2^{\fo},0,\cdots,0) & \text{if }m \text{ is odd,} \end{array}\right.\\
  w_3&=&\left\{\begin{array}{ll}(0,0,0,2^{\f},0,0,0,\cdots,0) & \text{if } m \text{ is even;} \\(0,0,0,0,0,0,2^{\fo},2^{\fo}0, \cdots,0) & \text{if }m \text{ is odd,} \end{array}\right.\\
w_4&=&\left\{\begin{array}{ll}(0,0,0,0,2^{\f},0,0,\cdots,0) & \text{if } m \text{ is even;} \\(2^{\fe},0,\cdots,0) & \text{if }m \text{ is odd,} \end{array}\right.\\
w_5&=&\left\{\begin{array}{ll}(0,0,0,0,0,2^{\f},0,\cdots,0) & \text{if } m \text{ is even;} \\(0,2^{\fe},0,\cdots,0) & \text{if }m \text{ is odd,} \end{array}\right.\allowdisplaybreaks\\
w_6&=&\left\{\begin{array}{ll}(0,0,0,0,0,0,2^{\f},0,\cdots,0) & \text{if } m \text{ is even;} \\(0,0,2^{\fe},0,\cdots,0) & \text{if }m \text{ is odd.} \end{array}\right.\allowdisplaybreaks
\end{eqnarray*}
 Then the code $\co'=\left< \co^* \cup \{w_p:1\leq p\leq 6\}\right>$ is a self-dual code of length $n+8.$ Moreover, the complete weight enumerator $cwe_{\co'}(X_{\mu}:\mu\in\z)$ of $\co'$ is given by
\vspace{-2mm}$$cwe_{\co'}(X_{\mu}:\mu\in\z)=\left\{\begin{array}{ll}\sum_1 X_0X_{(i+2j+2k_1)2^{\f-1}}
X_{(i+2k_1)2^{\f-1}}\\
      X_{k_22^{\f}}X_{k_32^{\f}}
      X_{k_42^{\f}}X_{k_52^{\f}}X_{k_62^{\f}}\\cwe_{\co_{\eta(i,j)}}(X_{\mu}:\mu\in\z)& \text{if }m \text{ is even;}\vspace{2mm}\\\sum_2
      X_{(i+j+2k_4)2^{\fo}}X_{(j+2k_5)2^{\fo}}\\
      X_{(k_1+2k_6)2^{\fo}}X_{k_12^{\fo}}
      X_{k_22^{\fo}}^2\\X_{k_32^{\fo}}^2cwe_{\co_{\eta(i,j)}}(X_{\mu}:\mu\in\z) & \text{if }m \text{ is odd,}\end{array}\right.\vspace{-2mm}$$
 where the summation $\sum_1$ runs over all integral 8-tuples $(i,j,k_1,k_2,\cdots,k_6)$ satisfying
 $1\leq j,k_p\leq 2^{\f}$ for $1\leq p\leq 6$ and $1\leq i\leq 2^{\f+1},$ whereas the summation $\sum_2$ runs over all integral 8-tuples $(i,j,k_1,k_2,\cdots,k_6)$ satisfying $1\leq i,j,k_1,k_2,k_3\leq {2^{\fe}}$ and $1\leq k_4,k_5,k_6\leq {2^{\fo}}.$
 \end{itemize}
 \vspace{-2mm}\item[III.] Let $n\equiv 3~(\text{mod }4).$ Here $m$ must be an even integer.
\begin{itemize}
\vspace{-2mm}\item[\textup{(a)}] Let $v_1,v_2 \in \z^5$ and $w_1,w_2,w_3 \in \z^{n+5}$ be chosen as \vspace{-2mm}\begin{eqnarray*}\begin{array}{lll}
 v_1=(2^{\f-1},2^{\f-1},0,0,0),&
 v_2=(2^{\f},0,0,0,0),&
 w_1=(2^{\f},2^{\f},0,\cdots,0),\\
 w_2=(0,0,2^{\f},0,\cdots,0),&
 w_3=(0,0,0,2^{\f},0,\cdots,0).\end{array}
 \end{eqnarray*} Then $\co'=\left< \co^* \cup \{w_1,w_2,w_3\}\right>$ is a self-dual code of length $n+5$ over $\z.$ Moreover, the complete weight enumerator $cwe_{\co'}(X_{\mu}:\mu\in\z)$ of $\co'$ is given by
\vspace{-2mm}\begin{eqnarray*}cwe_{\co'}(X_{\mu}:\mu\in\z)=\sum X_0X_{(i+2j+2k_1)2^{\f-1}}X_{(i+2k_1)2^{\f-1}}
      X_{k_22^{\f-1}}X_{k_32^{\f-1}}\\cwe_{\co_{\eta(i,j)}}(X_{\mu}:\mu\in\z),\end{eqnarray*}\normalsize
 where the summation $\sum$ runs over all integral 5-tuples $(i,j,k_1,k_2,k_3)$ satisfying $1\leq j,k_p\leq 2^{\f}$ for $1\leq p\leq 3,$ and $1\leq i\leq 2^{\f+1}.$
 \item[\textup{(b)}] Let $v_1,v_2 \in \z^{9}$ and $w_p \in \z^{n+9}$ $(1\leq p\leq 7)$ be chosen as \vspace{-2mm}\begin{eqnarray*}
     \begin{array}{ll}
 v_1=(2^{\f-1},2^{\f-1},0,0,0,0,0,0,0),&
 v_2=(2^{\f},0,0,0,0,0,0,0,0),\\
 w_1=(2^{\f},2^{\f},0,\cdots,0),&
 w_2=(0,0,2^{\f},0,\cdots,0),\\
 w_3=(0,0,0,2^{\f},0,\cdots,0),&
 w_4=(0,0,0,0,2^{\f},0,\cdots,0),\\
 w_5=(0,0,0,0,0,2^{\f},0,\cdots,0),&
 w_6=(0,0,0,0,0,0,2^{\f},0,\cdots,0),\\
 w_7=(0,0,0,0,0,0,0,2^{\f},0,\cdots,0).\end{array}
 \end{eqnarray*} Then $\co'=\left< \co^* \cup \{w_p:1\leq p\leq 7\}\right>$ is a self-dual code of length $n+9$ over $\z.$ Moreover, the complete weight enumerator $cwe_{\co'}(X_{\mu}:\mu\in\z)$ of $\co'$ is given by
\vspace{-2mm}\begin{eqnarray*}cwe_{\co'}(X_{\mu}:\mu\in\z)=\sum X_0X_{(i+2j+2k_1)2^{\f-1}}X_{(i+2k_1)2^{\f-1}}
      X_{k_22^{\f}}X_{k_32^{\f}}X_{k_42^{\f}}\\X_{k_52^{\f}}X_{k_62^{\f}}X_{k_72^{\f}} cwe_{\co_{\eta(i,j)}}(X_{\mu}:\mu\in\z),\end{eqnarray*}
 where the summation $\sum$ runs over all integral 9-tuples $(i,j,k_1,k_2,\cdots,k_{7})$ satisfying $1\leq j,k_p\leq 2^{\f}$ for $1\leq p\leq 7,$ and $1\leq i\leq 2^{\f+1}.$
 \end{itemize}
 \vspace{-2mm}\item[IV.] Let $n\equiv 1~(\text{mod }4).$ Here $m$ must be an even integer.
\begin{itemize}
\vspace{-2mm}\item[\textup{(a)}] Let $v_1,v_2 \in \z^7$ and $w_1,w_2,w_3,w_4,w_5 \in \z^{n+7}$ be chosen as \vspace{-2mm}\begin{eqnarray*}
    \begin{array}{ll}
 v_1=(2^{\f-1},2^{\f-1},0,0,0,0,0), &
 v_2=(2^{\f},0,0,0,0,0,0),\\
 w_1=(2^{\f},2^{\f},0,\cdots,0), &
 w_2=(0,0,2^{\f},0,\cdots,0),\\
 w_3=(0,0,0,2^{\f},0,\cdots,0),&
 w_4=(0,0,0,0,2^{\f},0,\cdots,0),\\
 w_5=(0,0,0,0,0,2^{\f},0,\cdots,0).\end{array}
 \end{eqnarray*} Then $\co'=\left< \co^* \cup \{w_1,w_2,w_3,w_4,w_5\}\right>$ is a self-dual code of length $n+7$ over $\z.$ Moreover, the complete weight enumerator $cwe_{\co'}(X_{\mu}:\mu\in\z)$ of $\co'$ is given by
\vspace{-2mm}\begin{eqnarray*}cwe_{\co'}(X_{\mu}:\mu\in\z)=\sum X_0X_{(i+2j+2k_1)2^{\f-1}}
X_{(i+2k_1)2^{\f-1}}
      X_{k_22^{\f}}X_{k_32^{\f}}X_{k_42^{\f}}
      X_{k_52^{\f}}\\ cwe_{\co_{\eta(i,j)}}(X_{\mu}:\mu\in\z),\end{eqnarray*}
 where the summation $\sum$ runs over all integral 7-tuples $(i,j,k_1,k_2,\cdots,k_{5})$ satisfying
       $1\leq j,k_p\leq 2^{\f}$ for $1\leq p\leq 5,$ and $1\leq i\leq 2^{\f+1}.$
\item[\textup{(b)}] Let $v_1,v_2 \in \z^{3}$ and $w_1 \in \z^{n+3}$ be chosen as \begin{eqnarray*}
         v_1=(2^{\f-1},2^{\f-1},0),~~  v_2=(2^{\f},0,0),~~ w_1=(2^{\f},2^{\f},0,\cdots,0).
 \end{eqnarray*} Then $\co'=\left< \co^* \cup w_1\right>$ is a self-dual code of length $n+3$ over $\z.$ Moreover, the complete weight enumerator $cwe_{\co'}(X_{\mu}:\mu\in\z)$ of $\co'$ is given by
\vspace{-2mm}\begin{eqnarray*}cwe_{\co'}(X_{\mu}:\mu\in\z)=\sum X_0X_{(i+2j+2k_1)2^{\f-1}}X_{(i+2k_1)2^{\f-1}}
      cwe_{\co_{\eta(i,j)}}(X_{\mu}:\mu\in\z),\end{eqnarray*}
 where the summation $\sum$ runs over all integral 3-tuples $(i,j,k_1)$ satisfying $1\leq j,k_1\leq 2^{\f}$ and $1\leq i\leq 2^{\f+1}.$
 \end{itemize}
 \end{description}
\end{theo}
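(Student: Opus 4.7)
The plan is to apply Proposition~\ref{p} in each of the four cases to produce a self-orthogonal code $\co^*$, then (except in case~I(a)) adjoin the auxiliary vectors $w_p$ to obtain a self-dual code $\co'$. Since $s \in \co_1$ and $t \in \co_2$ with $s\cdot s \equiv 2^{m-1}\pmod{2^m}$, part~(b) of the generalized-shadow orthogonality lemma in Section~\ref{secprelim} yields $s\cdot t \equiv 2^{m-1}$ and $t\cdot t \equiv 0\pmod{2^m}$. Property~$(\mathbf{P_2})$ of Proposition~\ref{p} therefore becomes
$$v_1\cdot v_1 \equiv 2^{m-1},\qquad v_1\cdot v_2 \equiv 2^{m-1},\qquad v_2\cdot v_2 \equiv 0 \pmod{2^m},$$
which I would verify directly from the explicit coordinates of $v_1, v_2$ in each sub-case. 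Property~$(\mathbf{P_1})$ follows from the supports and $2$-adic valuations of the nonzero entries of $v_1, v_2$ being arranged so that no nontrivial relation $\alpha v_1 + \beta v_2 = 0$ with $0\le\alpha<o(v_1)$, $0\le\beta<o(v_2)$ can occur; property~$(\mathbf{P_3})$ is checked by a direct order computation, using that $\co_0^\perp/\co_0$ is the Klein $4$-group when $n$ is even and cyclic of order $4$ when $n$ is odd (an argument identical to Lemma~\ref{lemsglue}, since the relevant coset structure carries over to the generalized-shadow setting), fixing the branch of~\eqref{n} that defines $\eta(i,j)$.

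With the three properties in hand, Proposition~\ref{p} gives $\co^*$ self-orthogonal of length $n+k$ with $|\co^*| = o(v_1)o(v_2)\,2^{mn/2-1}$. In case~I(a), $o(v_1)o(v_2) = 2^{m+1}$, so $|\co^*| = (2^m)^{(n+2)/2}$ and $\co^*$ is already self-dual. In the remaining sub-cases I would verify that each $w_p$ is pairwise orthogonal to the other $w_q$'s and orthogonal to the generators $(v_1,c_1),(v_2,c_2)$ of $\co^*$; these checks reduce to elementary inner-product computations, as each $w_p$ has its nonzero entries (drawn from $\{2^{m/2},\,2^{(m-1)/2},\,2^{(m+1)/2}\}$) supported on a small, essentially disjoint coordinate window. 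Since the $w_p$'s then contribute independently (an argument analogous to~$(\mathbf{P_1})$), one has $|\co'| = |\co^*|\prod_p o(w_p)$, and in every sub-case this product equals $(2^m)^{(n+k)/2}$, giving self-duality.

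Finally, every codeword of $\co'$ has the form
$$c' = (iv_1 + jv_2,\, c_{ij}) + \sum_p k_p w_p,\qquad c_{ij} \in \co_{\eta(i,j)},$$
with indices $i,j,k_p$ ranging over the respective additive orders. The complete weight enumerator is read off coordinate by coordinate from $iv_1 + jv_2 + \sum_p k_p w_p$ for the first $k$ positions and multiplied by $cwe_{\co_{\eta(i,j)}}(X_\mu:\mu\in\z)$ for the last $n$ positions; summing over admissible tuples produces the stated formulas. The main obstacle is the bookkeeping: four residue classes of $n\bmod 4$, each with two sub-cases, and within each (when $n$ is even) separate formulas for $m$ even and $m$ odd must all be handled. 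In every sub-case the choice of $v_1, v_2, w_p$ must be tuned so that the inner-product congruences, the order conditions, the pairwise orthogonality of the $w_p$'s, and the final cardinality $(2^m)^{(n+k)/2}$ all line up simultaneously; and the weight-enumerator read-off must carefully track the coordinates of $iv_1 + jv_2 + \sum_p k_p w_p\pmod{2^m}$ coordinate by coordinate and match them against the exponents printed in the theorem.
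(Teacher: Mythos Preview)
Your proposal is correct and follows essentially the same route as the paper, whose entire proof reads ``Working in a similar way as in Theorem~\ref{theosneven}, the result follows.'' You have simply unpacked what that entails: invoke the orthogonality table from part~(b) of the generalized-shadow lemma to reduce $(\mathbf{P_2})$ to $v_1\cdot v_1\equiv v_1\cdot v_2\equiv 2^{m-1}$, $v_2\cdot v_2\equiv 0$, verify $(\mathbf{P_1})$--$(\mathbf{P_3})$, apply Proposition~\ref{p}, adjoin the $w_p$'s with the routine orthogonality and independence checks, count to reach $(2^m)^{(n+k)/2}$, and read off the enumerator coordinatewise.

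One minor caveat on your glue-group claim: in the generalized-shadow setting of this theorem the quotient $\co_0^\perp/\co_0$ is in fact the Klein $4$-group for \emph{all} $n$, not cyclic when $n$ is odd. Indeed, since $\mathrm{Im}(\psi_s)=\{0,2^{m-1}\}$ one has $2s\cdot c=0$ for every $c\in\co$, so $2s\in\co^\perp=\co$, and $2s\cdot s=2\cdot 2^{m-1}\equiv 0$ gives $2s\in\co_0$; hence $\co_1$ (and likewise $\co_2$) has order $2$ in the quotient. This does not disturb your argument---the cyclic branch of $(\mathbf{P_3})$ is the stronger condition, and the statement of the theorem leaves $\eta(i,j)$ defined by~\eqref{n}, so the correct (Klein) branch is automatically selected.
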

\begin{proof}
Working in a similar way as in Theorem \ref{theosneven}, the result follows.
\end{proof}
\begin{rem}
\noindent \begin{itemize}
\vspace{-2mm}\item[(i)] Theorems 1 and 2 of Brualdi and Pless \cite{bru} follow from Theorem \ref{theosneven} by taking $m=1.$
\vspace{-2mm}\item[(ii)] The main theorem of Tsai \cite{han} follows from Theorems \ref{theogs0} and \ref{theogsn0} by taking $m=1.$
\vspace{-2mm}\item[(iii)] One can prove Theorems \ref{theosneven}-\ref{theogsn0} for codes over the ring $\mathbb{Z}_{2^mk^2}$ ($k >1$ is an integer) by replacing vectors $v_1,v_2$ and $w_p$'s with $kv_1,kv_2$ and $kw_p$'s, respectively.
\end{itemize}
\end{rem}

\end{document}